\documentclass[]{amsart}
\usepackage[utf8]{inputenc}
\usepackage[mathscr]{eucal} % So-called Euler fonts, allows \mathscr
\usepackage{stmaryrd} % St. Mary's font, fixes a few bad symbols
\usepackage{amsmath,amsthm,amsfonts,amssymb,amscd} % Standard AMS packages
\usepackage[dvipsnames]{xcolor} % Allows colors
\usepackage{xspace} % Allows smart spacing after period
\usepackage{fancyhdr} % For footers, headers
\usepackage{graphicx} % For inserting images
\usepackage{listings} % For source code listing
\usepackage[]{hyperref} % For hyperlinks
\usepackage{enumitem} % More control for enumerated lists
\usepackage{relsize} % Change font size relative to current size
\usepackage{tikz-cd} % For commutative diagrams
\usepackage{mdwlist} % Additional list customization
\usepackage{multicol} % Allows multiple columns
\usepackage{float} % Improved placement for floating objects
\usepackage{adjustbox} % Adjust boxed content
\usepackage{tikz} % Regular tikz
\usepackage[noabbrev,nameinlink]{cleveref} % Additional reference customization
\Crefformat{section}{#2\S#1#3}% % to get \S instead of \Section
\usepackage{setspace} % Used for changing spacing
\usepackage{bbm} % more bb
\usepackage{mathtools}
\usepackage[all]{xy}
\usetikzlibrary{matrix, calc, arrows}

\hypersetup{
    colorlinks=true, %set true if you want colored links
    linktoc=all,     %set to all if you want both sections and subsections linked
    linkcolor=Brown,citecolor=Brown,urlcolor=MidnightBlue,  %choose some color if you want links to stand out u
}
\setlist[enumerate]{label=(\alph*)} % Default enumerate alpha
\usetikzlibrary{graphs,decorations.pathmorphing,decorations.markings}
\tikzcdset{scale cd/.style={every label/.append style={scale=#1},
        cells={nodes={scale=#1}}}}

\newcommand{\Aut}{\operatorname{Aut}}

\newcommand{\inv}{^{-1}}

\newcommand{\End}{\operatorname{End}}

\newcommand{\Res}{\operatorname{Res}}
\newcommand{\Ind}{\operatorname{Ind}}
\newcommand{\Inf}{\operatorname{Inf}}

\newcommand{\im}{\operatorname{im}}

\newcommand{\Irr}{\operatorname{Irr}}
\newcommand{\Gal}{\operatorname{Gal}}

\newcommand{\on}[1]{\operatorname{#1}}

\newcommand{\calF}{\mathcal{F}}
\newcommand{\calG}{\mathcal{G}}

\newcommand{\calO}{\mathcal{O}}

\newcommand{\bbC}{\mathbb{C}}

\newcommand{\bbF}{\mathbb{F}}

\newcommand{\bbH}{\mathbb{H}}

\newcommand{\bbN}{\mathbb{N}}

\newcommand{\bbQ}{\mathbb{Q}}
\newcommand{\bbR}{\mathbb{R}}

\newcommand{\bbZ}{\mathbb{Z}}

\makeatletter
\newcommand*{\doublerightarrow}[2]{\mathrel{
        \settowidth{\@tempdima}{$\scriptstyle#1$}
        \settowidth{\@tempdimb}{$\scriptstyle#2$}
        \ifdim\@tempdimb>\@tempdima \@tempdima=\@tempdimb\fi
        \mathop{\vcenter{
                \offinterlineskip\ialign{\hbox to\dimexpr\@tempdima+1em{##}\cr
                    \rightarrowfill\cr\noalign{\kern.5ex}
                    \rightarrowfill\cr}}}\limits^{\!#1}_{\!#2}}}
\newcommand*{\triplerightarrow}[1]{\mathrel{
        \settowidth{\@tempdima}{$\scriptstyle#1$}
        \mathop{\vcenter{
                \offinterlineskip\ialign{\hbox to\dimexpr\@tempdima+1em{##}\cr
                    \rightarrowfill\cr\noalign{\kern.5ex}
                    \rightarrowfill\cr\noalign{\kern.5ex}
                    \rightarrowfill\cr}}}\limits^{\!#1}}}
\makeatother

\newcommand{\RO}{\on{RO}}
\newcommand{\CF}{\on{CF}}

\newtheorem{theorem}{Theorem}[section]
\newtheorem{lemma}[theorem]{Lemma}
\newtheorem{proposition}[theorem]{Proposition}
\newtheorem{corollary}[theorem]{Corollary}

\newtheorem*{theorem*}{Theorem}
\newtheorem*{conjecture*}{Conjecture}
\newtheorem{conjecture}{Conjecture}

\newtheorem{question}{Question}

\theoremstyle{definition}
\newtheorem*{definition*}{Definition}

\theoremstyle{remark}

\newtheorem{remark}[theorem]{Remark}
\newtheorem{example}[theorem]{Example}

\theoremstyle{definition}
\newtheorem{definition}[theorem]{Definition}

\makeatletter
\newcommand{\xhookdoubleheadrightarrow}[2][]{%
    \lhook\joinrel
    \ext@arrow 0359\rightarrowfill@ {#1}{#2}%
    \mathrel{\mspace{-15mu}}\rightarrow
}
\makeatother

\theoremstyle{theorem}
\newtheorem{innercustomthm}{Theorem}
\newenvironment{customthm}[1]
{\renewcommand\theinnercustomthm{#1}\innercustomthm}
{\endinnercustomthm}

\AddToHook{env/corollary/begin}{\crefalias{theorem}{corollary}}
\AddToHook{env/proposition/begin}{\crefalias{theorem}{proposition}}
\AddToHook{env/lemma/begin}{\crefalias{theorem}{lemma}}
\AddToHook{env/definition/begin}{\crefalias{theorem}{definition}}
\AddToHook{env/example/begin}{\crefalias{theorem}{example}}
\AddToHook{env/remark/begin}{\crefalias{theorem}{remark}}
\AddToHook{env/observation/begin}{\crefalias{theorem}{observation}}
\AddToHook{env/construction/begin}{\crefalias{theorem}{construction}}
\AddToHook{env/question/begin}{\crefalias{theorem}{question}}

\begin{document}
    \title{Non-orientable representation spheres}
    \author{Sam K. Miller}
    \address{Department of Mathematics, University of Georgia, Athens GA 30602, United States of America} %required
    \email{sam.miller@uga.edu} %optional
    \subjclass[2020]{19A22, 20C15, 22F05, 54H15} %required
    \keywords{Orientable, representation sphere, endotrivial complex, homotopy representation, permutation twisted cohomology, Burnside ring unit} %optional
    \begin{abstract}
        For any finite group, we pose the following question: given an irreducible real representation, is the associated representation sphere non-orientable if and only if the representation is nontrivial of real type? We prove that the answer to this question is yes if the group has a normal Sylow 2-subgroup, but exhibit a 2-nilpotent group of order 112 for which the answer is no via elementary arguments. We also link the question to the unit group of the Burnside ring, where we recover a basis discovered by Bouc for 2-groups, and pose a conjecture about detection of non-orientability from solvable subgroups. The question is motivated by issues arising from the construction of permutation twisted cohomology for finite groups.
    \end{abstract}

    \maketitle

    \section*{Introduction}

    In their landmark paper outlining the theory of homotopy representations for finite groups, tom Dieck--Petrie defined the notion of an \textit{orientable} homotopy representation  \cite{tDP82}. Perhaps one of the easiest examples of a homotopy representation is a \emph{representation sphere}, i.e., the one-point compactification of a real representation, by Illman's equivariant triangulation theorem \cite{Ill78}. However, the question of orientability of representation spheres appears to have never been studied. It is not hard to prove that the only possible candidates for non-orientable spheres arising from irreducible real representations are those which are nontrivial and of real type. This motivates the following question:

    \begin{question}\label{q:the_question}
        Let $G$ be a finite group and $V$ be an irreducible real representation. Is the representation sphere $S^V$ non-orientable if and only if $V$ is nontrivial and has real type?
    \end{question}

    We say any group $G$ for which the answer of \Cref{q:the_question} is yes is \emph{nirrorno} (nontrivial irreducible real representations of real-type are non-orientable). It turns out that perhaps surprisingly, many finite groups are nirrorno.

    \begin{customthm}{A}\label{thm:A}(\Cref{thm:2_grp_nirrorno})
        If $G$ has a normal Sylow 2-subgroup, then $G$ is nirrorno.
    \end{customthm}

    To answer \Cref{q:the_question}, we prove an equivalent formulation of non-orientability of a representation sphere (equivalently, a real representation) which brings the unit group $A(G)^\times$ of the Burnside ring of a finite group $G$ into the picture. We also deduce a criterion on solvable subgroups of $G$ which can detect non-orientability.

    \begin{customthm}{B}(\Cref{prop:equiv_conditions}, \Cref{prop:solv_cond})
        Let $V$ be a real representation of a finite group $G$, and let $\pi_V(H) := \dim V^H \on{mod} 2$ denote the \emph{parity function} of $V$. The following are equivalent.
        \begin{enumerate}
            \item There exists a subgroup $H$ of $G$ and $g \in N_G(H)$ such that $\det(\rho_{V^H}(g)) = -1$, i.e., $S^V$ is non-orientable;
            \item There exist subgroups $K_1 \trianglelefteq K_2 \leq G$ with $[K_2:K_1] = 2$ and $\dim_V(K_1) \not\equiv \dim_V(K_2) \mod 2$.
        \end{enumerate}
        Moreover, if $\pi_V$ is nonconstant on the set of solvable subgroups of $G$, then the equivalent conditions of \Cref{prop:equiv_conditions} hold for $V$. One may choose $(K_1,K_2)$ to be solvable in this case.
    \end{customthm}

    We remark that the parity function $\pi_V$ identifies with the image of $V$ in $A(G)^\times$ under the \emph{tom Dieck homomorphism}. Whether the converse of the final statement holds is open, and we conjecture it holds.

    \begin{conjecture*}(\Cref{q:solv_cond})
        If $S^V$ is non-orientable, then $\pi_V$ is nonconstant on the set of solvable subgroups of $G$.
    \end{conjecture*}

    In a computational test using GAP \cite{GAP4} for (non-solvable) groups of order up to 2000, as well as all 88 finite simple groups which the library \texttt{TomLib} \cite{TomLib1.2.11} has data of, we found no counterexample. See the appendix for all relevant code. We remark on the apparent difficulty of proving this conjecture, see \Cref{rmk:conj_hard}.

    The fact that solvable subgroups seem to detect non-orientability on the nose motivates the following definition.

    \begin{definition*}(\Cref{def:strong_nirrorno})
        Let $\calG(G)$ denote a set of representatives of Galois orbits of nontrivial irreducible real representations of real type. An even order finite group $G$ is \emph{strong nirrorno} if the following condition holds: the set \[\{\pi_\bbR\}\cup \bigcup_{V \in \calG(G)}\{\pi_V\}\subseteq \CF(\on{Solv}(G), \bbF_2)\] restricted to the solvable subgroups of $G$ is linearly independent.
    \end{definition*}

    Strong nirrorno-ness implies nirrorno-ness and holds for groups with normal Sylow 2-subgroups. Moreover, strong nirrorno-ness is necessary to prove \Cref{thm:A} in generality. Given a 2-group $G$, Bouc deduced a `canonical' basis of the unit group of the Burnside ring $A(G)^\times$. The set listed above is precisely this basis: in other words, one obtains the basis via the tom Dieck homomorphism applied to irreducible non-orientable representation spheres. See \Cref{rmk:burnside} and \Cref{rmk:2_grp_rmks} for details.

    On the other hand, we find that not all groups are nirrorno, and that nirrorno-ness and strong nirrorno-ness are not the same. In fact, the non-nirrorno group we find is 2-nilpotent, i.e., has a normal 2-complement. In particular, it is solvable.

    \begin{customthm}{C}(\Cref{thm:counterex}, \Cref{thm:str_nir_not_nir})
        \begin{enumerate}
            \item The 2-nilpotent group $C_7 \rtimes SD_{16}$, where $SD_{16}$ acts by involutions on $C_7$ with kernel its unique subgroup isomorphic to $Q_8$, contains a real irreducible representation of real type of degree 4 whose associated representation spheres is orientable.
            \item The solvable group $C_3 \rtimes \on{GL}_2(\bbF_3)$ is nirrorno, but not strong nirrorno. This is the minimal example of such a group.
        \end{enumerate}

    \end{customthm}

    The proof of (a) is entirely non-computational, and relies on machinery from twisted Frobenius-Schur indicators. On the other hand, (b) was computed purely computationally via GAP, and we provide all supplemental code in the appendix. Via GAP computation, we also determine that only 3 of the 88 finite simple groups in \texttt{TomLib} are not nirrorno: $J_2, J_3,$ and $\on{PSp}_4(\bbF_5)$.

    \subsection*{Relation to prior work} \Cref{q:the_question} is motivated from the construction of \emph{permutation twisted cohomology} (in the sense of Balmer--Gallauer \cite{BG25}, see also \cite{Mil26}) to understand the tensor-triangular geometry of the tensor-triangulated category $\on{K}_b(\mathsf{perm}(kG))^\natural)$, the bounded homotopy category of $p$-permutation $kG$-modules, for $G$ a finite group and $k$ a field of prime characteristic $p$. Here, one considers an analogous notion of orientability of an \emph{endotrivial complex}, i.e., an invertible object in $\on{K}_b(\mathsf{perm}(kG))^\natural)$.

    Representation spheres induce endotrivial complexes in a precise way, see \cite[Section 3]{Mil26}, and in constructing twisted cohomology, one should only twist by certain `locally trivial' endotrivials, a (possibly) smaller class than orientable endotrivials. In this setting, orientable representation spheres induce locally trivial endotrivials, whereas non-orientable representation spheres induce locally trivial endotrivials only if $p = 2$. Therefore, in the situation of $p$ odd and $G$ a non-2-group of even order, one benefits from knowing when certain representations are orientable or not. The situation is more delicate for representation spheres, see \Cref{ex:endotriv_ex}.

    \subsection*{Statement on generative AI use}

    This paper partially arose from testing the capabilities of a publicly available generative AI model and seeing if it could settle \Cref{q:the_question}, which I had posed as a conjecture for all finite groups. The model was able to reprove some results I had obtained previously and made a few minor discoveries. After more attempts at proving the supposed conjecture, it discovered the counterexample and provided the idea of the proof given. The model also aided in the writing of the GAP code provided in the appendix. All other writing in the paper is entirely my own, and I take full responsibility for its content and correctness.

    \section{Preliminaries}

    For this paper, we adopt the convention of discussing orientability in terms of real representations, as character theory will come into play in the sequel. Throughout, given a $kG$-module $V$ (often referred to as a $k$-representation), for $G$ a finite group and $k$ a characteristic 0 field, $\chi_V\colon G \to k$ denotes the associated character and $\rho_V\colon G \to \Aut(V)$ denotes the associated matrix representation. The subscripts may be removed if the context is clear.

    \begin{definition}
        Let $G$ be a finite group and $V$ be a real $\bbR G$-module (equivalently, an orthogonal representation). We say $V$ is \emph{orientable} if its representation sphere $S^V$ is orientable in the sense of \cite[Definition 1.8]{tDP82}. That is, for every subgroup $H$ of $G$ and $g \in N_G(H)$, the action of $g$ on $(S^V)^H$ is orientation-preserving, i.e., the induced action on $\on{H}^{\dim V^H}((S^V)^H) \cong \bbZ$ is trivial. Equivalently, for all $H$ of $G$ and $g \in N_G(H)$, $\det(\rho_{V^H}(g)) = 1$, where $\rho_{V^H}$ denotes the orthogonal representation associated to $V^H$. Note $\det(\rho_{V^H}(g)) \in \{\pm1\}$.
    \end{definition}

    \begin{example}
        If $G$ has odd order, then every representation of $V$ is orientable, as every element $g \in G$ has odd order.
    \end{example}

    \begin{definition}
        We say an irreducible real representation $V$ is of \emph{real} (resp. \emph{complex}, \emph{quaternion}) type if $\End(V) \cong \bbR$ (resp. $\bbC$, $\bbH$).
        If $V$ is of real type, then $\bbC \otimes_\bbR V$ remains irreducible, but if $V$ is of complex or quaternionic type, then $\bbC \otimes_\bbR V = W \oplus \overline{W}$.

        Related, let $V$ be an irreducible complex representation of $G$ with character $\chi$. The \emph{Frobenius--Schur indicator} $\nu(\chi)$ is defined as follows: \[\nu(\chi) := \frac{1}{|G|}\sum_{g \in G}\chi(g^2)\in \{-1,0,1\}.\] If $\nu(\chi) = 0$, then $\chi$ admits complex values, and $\chi + \overline{\chi}$ is the complexification of a real irreducible representation. if $\nu(\chi) = 1$ then $\chi$ is real-valued and $V$ is the complexification of a real irreducible representation, and if $\nu(\chi) = -1$, then $\chi$ is real-valued, and $\chi+ \chi$ (but not $\chi$) is the complexification of a real irreducible representation.
    \end{definition}

    Complex and quaternion type representations are automatically orientable.

    \begin{proposition}\cite[Proposition 8.4]{GM26}
        Let $V$ be a real irreducible representation of $G$. If $V$ is of complex or quaternionic type, then $V$ is orientable.
    \end{proposition}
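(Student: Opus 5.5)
The claim is that for every subgroup $H \le G$ and every $g \in N_G(H)$ we have $\det(\rho_{V^H}(g)) = 1$. The plan is to use the extra linear structure that $V$ carries when it has complex or quaternionic type, and to observe that this structure passes to each fixed-point subspace $V^H$. Once $V^H$ is a complex vector space on which $g$ acts complex-linearly, the real determinant of $g$ is a norm, hence positive, and it must equal $1$.

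First, let $V$ be of complex type, so $\End_{\bbR G}(V) \cong \bbC$. Choose $J \in \End_{\bbR G}(V)$ with $J^2 = -1$. Then $J$ makes $V$ into a complex vector space, and every $\rho_V(g)$ is $\bbC$-linear. Since $J$ commutes with the $G$-action, it preserves $V^H$ for every subgroup $H$: if $v \in V^H$ and $h \in H$, then $hJv = Jhv = Jv$. So $V^H$ is a complex subspace.

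Next, take $g \in N_G(H)$. It preserves $V^H$, because for $h \in H$ and $v \in V^H$ we have $hgv = g(g^{-1}hg)v = gv$. Moreover $g$ acts on $V^H$ by a $\bbC$-linear map $T$. Now apply the standard fact that for a complex-linear endomorphism $T$ of a complex vector space $W$, viewed as a real endomorphism of the underlying real space,
\[\det\nolimits_\bbR(T) = |\det\nolimits_\bbC(T)|^2 \geq 0.\]
This holds by complexifying: $W_\bbR \otimes_\bbR \bbC \cong W \oplus \overline{W}$, and the determinant becomes $\det_\bbC(T)\overline{\det_\bbC(T)}$. The preliminaries record that the determinant lies in $\{\pm 1\}$, since $V$ may be taken orthogonal. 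It is therefore $1$. If $V^H = 0$, the determinant is $1$ trivially.

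The quaternionic case reduces to the complex one. If $\End_{\bbR G}(V) \cong \bbH$, pick any $J \in \bbH$ with $J^2 = -1$, for instance $J = i$. The argument above then goes through verbatim, since only a single commuting complex structure was used. So in either type, each $V^H$ carries a $G_H := N_G(H)$-invariant complex structure, and all the relevant determinants are $1$. Hence $V$ is orientable.

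I expect no real obstacle here. The only point needing care is the determinant identity for realifications, and it is classical. If one wants to avoid it, an alternative is a connectedness argument: $\on{GL}_\bbC(V^H)$ is connected, so its image in $\on{GL}_\bbR(V^H)$ lies in the identity component, which consists of the maps of positive determinant.
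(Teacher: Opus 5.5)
Your proof is correct and is essentially the paper's argument: the paper complexifies $V^H$ as $W \oplus \overline{W}$ and writes $\det(\rho_{V^H}(g)) = \det(\rho_W(g))\overline{\det(\rho_W(g))}$, which is exactly your identity $\det_\bbR(T) = |\det_\bbC(T)|^2$. Your $G$-equivariant complex structure $J$ adds two small things: it shows explicitly why the splitting descends to $V^H$ as an $N_G(H)$-representation, which the paper leaves implicit, and it handles the complex and quaternionic types uniformly, so no separate case is needed.
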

    \begin{proof}
        If $V$ is of complex or quaternionic type, then for any subgroup $H$ of $G$, $\bbC \otimes_\bbR V^H = W \oplus \overline{W}$. Then for any $g \in N_G(H)/H$, \[\det(\rho_{V^H}(g)) = \det(\rho_W(g))\det(\rho_{\overline{W}}(g)) = \det(\rho_W(g))\overline{\det(\rho_W(g))}.\] If $V$ is quaternionic type, then \[\det(\rho_W(g))\overline{\det(\rho_W(g))} = |\det(\rho_W(g))|^2 = 1,\] and if $V$ is quaternionic, then since $W = \overline{W}$, \[\det(\rho_W(g))\overline{\det(\rho_W(g))} = \det(\rho_W(g))^2 = 1.\]

    \end{proof}

    % Therefore, the point is moot when $G$ has odd order, its only irreducible real representation of real type is $\bbR$. This fact is well-known, but we provide a short proof for reference.

    % \begin{proposition}\label{prop:no_odd_nirrors}
        %     Let $G$ be an odd order finite group and $V$ be a nontrivial irreducible real representation of $G$. Then $V$ is not of real type. Consequently, all irreducible real representations of $V$ are or
        % \end{proposition}
    % \begin{proof}
        %     We have by \cite[Borel--Smith functions]{tD87} that $\dim V$ is an even-valued function, since $\dim V^G = 0$; this follows easily from the Borel--Smith conditions. However $\bbC \otimes_\bbR V$ is either a direct sum of two irreducibles or an irreducible itself, however by character theory every irreducible character of $G$ must have odd degree, so $\bbC \otimes_\bbR V$, which has even $\bbC$-dimension, cannot be irreducible. Thus $V$ cannot have real type.
        % \end{proof}

    Additionally, the trivial representation $\bbR$, is obviously orientable of real type. So the only irreducible real representations which can possibly be non-orientable are nontrivial irreducible real representations. It is well known that these only occur for groups of even order.

    \begin{definition}
        We abbreviate `nontrivial irreducible real representation of real-type' by \emph{nirror}. We say an finite group $G$ for which the nirrors are precisely the non-orientable irreducibles is \emph{nirrorno} (nirrors are non-orientable). We write $\mathsf{nir}(G)$ for the collection of nirrors of $G$, and write $\calG(G) := \mathsf{nir}(G)/\Gal(\bbQ^{\on{ab}}/\bbQ)$ for (a class of representatives of) the Galois orbits of nirrors given by the action of the absolute Galois group $\Gal(\bbQ^{\on{ab}}/\bbQ)$.
    \end{definition}

    \subsection{Variations on orientability}

    We can consider weaker versions of `orientability' that seem reasonable or are motivated. For instance, if we only look at the orientation behavior for the representation $V$ and not the fixed-point representations $V^H$, one may ask the same question.

    \begin{definition}\label{def:triv_global}
        Say a real representation $V$ of a finite group $G$ has \emph{trivial global orientation behavior} if for all $g \in G$, $\det(\rho_V(g)) = 1$.
    \end{definition}

    In this case, there are a wealth of representations for which the action of all $g$ on $V$ are orientation-preserving, but $V$ is not orientable.

    \begin{example}\label{ex:SO_ex}
        Let $G= A_4$. There is a nirror $\chi_V$ of degree 3, whose associated real representation $V$ arises from the action of $S_4$ on $\bbR^4$, then quotienting by the invariant 1-dimensional subspace. Then $A_4$ is precisely the subgroup of $S_4$ which acts on $V$ by orientation-preserving elements, i.e., the associated representation is of the form $\chi\colon A_4 \to \on{SO}(V)$.

        However, $V$ is non-orientable. Indeed, let $C \cong C_2$ be a subgroup of $A_4$ and let $N \cong V_4$ denote its normalizer. Then $V^{C}$ is a $\bbR[N/C]$-module of dimension 1, and $g \in N \setminus C$ acts by involution; one may see this by deducing $V^N = 0$.
    \end{example}

    The question of orientability originally arose from the construction of \emph{permutation twisted cohomology} for arbitrary finite groups $G$ over a field $k$ of positive characteristic (see \cite{BG25, Mil26} and \cite[Section 8]{GM26}). In this setting, one wants to consider a notion of orientability for an \emph{endotrivial complex}, i.e., an invertible object in the bounded homotopy category of $p$-permutation $kG$-modules, $\on{K}_b(\mathsf{perm}(kG)^\natural)$. See \cite[Definition 8.2]{GM26} for a precise definition. Endotrivial complexes are induced from representation spheres, and if $G$ is a $p$-group, every endotrivial is induced from a virtual representation sphere \cite{M24c}. Moreover, orientable representation spheres induce orientable endotrivial complexes, but the converse need not hold.

    To construct twisted cohomology, one wants to twist by certain `locally trivial' endotrivial complexes, and orientability is a necessary condition for local triviality. However, in the algebraic setting, one can only take fixed-point complexes (via \emph{Brauer quotients/modular fixed points}) at $p$-subgroups, hence orientation behavior can only be detected $p$-locally. More generally, if $k$ is a commutative ring and $p = 1 + \cdots + 1 $ is a nonunit in $k$, then one can take \emph{generalized modular fixed points}, see \cite{GM26}, and so one can consider the orientation behavior at prime-power subgroups. Again, one can find examples of nirrors which have trivial orientation behavior at all $p$-subgroups, for a fixed prime $p$.

    \begin{example}\label{ex:endotriv_ex}
        The same example from \Cref{ex:SO_ex}, of $G = A_4$ and $V$ the irreducible real representation of real type and dimension 3, has trivial orientation behavior on $3$-subgroups. Indeed, from before, we deduced that $\chi\colon A_4 \to SO(V)$ preserves orientation trivially, and the only 3-subgroups of $A_4$ are self-normalizing.
        In this case, the non-orientable $V$ produces an endotrivial complex that is orientable.
    \end{example}

    \section{Equivalent formulations of non-orientability}

    We reformulate non-orientability in terms of elements in the unit group of the Burnside ring of a finite group.

    \begin{definition}
        Given a real representation $V$ of $G$, its \emph{dimension function} $\dim_V \in \CF(G)$ is the \emph{superclass function} (i.e., function from the set of subgroups of $G$ to $\bbZ$ constant on conjugacy classes) \[H \mapsto \dim V^H.\] This is a monotone \emph{Borel--Smith function}, see \cite[Chapter III, Section 5]{tD87}; we discuss further aspects in \Cref{rmk:conj_hard}.
        Write $\pi_V$ for the $\bbF_2$-reduction of $\dim_V$. We call this the \emph{parity function} of $V$.
    \end{definition}

    \begin{remark}\label{rmk:burnside}
        The \emph{Burnside ring} $A(G)$ of a finite group $G$ is the split Grothendieck ring of the category of finite $G$-sets. The problem of determining $A(G)^\times$ for arbitrary finite groups is famously difficult. Indeed, the statement ``if $|G|$ is odd, then $A(G)^\times = \{\pm[G/G]\}$'' is equivalent to the Feit--Thompson odd order theorem, see \cite[Remark 11.2.3]{Bou10}.

        There is an injective ring homomorphism, the \emph{mark homomorphism} \[m_G\colon A(G) \to \on{CF}(G), \quad x \mapsto \big(H \mapsto |x^H|\big),\] and in this way, elements of $A(G)$ are identified via their marks. The \emph{tom Dieck homomorphism} \[\Theta_G\colon \RO(G) \to A(G)^\times, \quad V \mapsto \big(H \mapsto (-1)^{\dim_V(H)}\big)\] is well-defined and surjective when $G$ is a $p$-group \cite{Tor84,Y05}. Moreover, we have an obvious group identification $\CF(G,\bbF_2) \cong \CF(G)^\times$. In this way, the parity functions $\pi_V \in \CF(G, \bbF_2)$ are identified in $\CF(G)^\times$ as nothing more than $\Theta_G(V) \in A(G)^\times \subseteq \CF(G)^\times$. Given this identification, we will regard $\pi_V \in A(G)^\times$ without further mention; we regard the parity function as equivalently, the image of $V$ under the tom Dieck homomorphism.

        The inclusion $m_G\colon A(G)^\times \to \CF(G)^\times$ need not be surjective. Yoshida \cite{Yos90} proves that a function $f \in \CF(G)^\times$ is in the image of $m_G$ if and only if $f$ satisfies the following property: for every $H \leq G$, the \emph{Yoshida} map \[N_G(H) \to \bbZ^\times, \quad g \mapsto f(\langle H,g \rangle)/f(H)\] is a group homomorphism.

        In our case, we have an equality \[\det(\rho_{V^H}(g)) = |\Theta_G(V)^H|/|\Theta_G(V)^{\langle H, g\rangle}| = (-1)^{\dim_V(H) - \dim_V(\langle H,g\rangle)}.\] Indeed, the right-hand side counts the sign of the dimension of an orthogonal complement of the $g$-fixed subspace of $V^H$. The eigenspace corresponding to roots of unity greater than 2 must be even-dimensional, since these are complex and therefore come in conjugate pairs. Therefore, the sign is determined by the dimension of the $-1$-eigenspace, which the determinant counts.
        Hence the collection of Yoshida's maps is precisely the orientation behavior of $V$.

    \end{remark}

    We note that Galois conjugacy does not change the dimension function of a real representation, nor its type.

    \begin{proposition}\label{prop:galois_invariance}
        Let $V$ be a real representation with character $\rho_V$ and $H$ be a subgroup of $G$.
        \begin{enumerate}
            \item We have \[\dim V^H = \langle \Res^G_H \chi_V, 1_{H}\rangle_H = \frac{1}{|H|}\sum_{g \in H} \chi_V(g) = \langle \chi_V, \chi_{\bbR[G/H]}\rangle_G;\]
            \item Let $\sigma$ be a field automorphism of a splitting field. Then $\dim V^H = \dim {}^\sigma V^H$. In particular, $\dim V$ depends only on the Galois class (equivalently, Adams operation class) of $V$.
        \end{enumerate}
    \end{proposition}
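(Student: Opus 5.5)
For part (a), I will start from the fact that $V^H$ is the image of the averaging idempotent $e_H = \frac{1}{|H|}\sum_{h\in H} h$ acting on $V$. Since $e_H$ is an idempotent, its trace equals the rank of its image, and that rank is $\dim V^H$. Expanding the trace gives $\frac{1}{|H|}\sum_{h\in H}\chi_V(h)$. This expression is by definition $\langle \Res^G_H\chi_V, 1_H\rangle_H$, which gives the first two equalities.

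For the last equality I will use Frobenius reciprocity. Since $\chi_{\bbR[G/H]} = \Ind_H^G 1_H$, we get $\langle \Res^G_H\chi_V,1_H\rangle_H = \langle \chi_V, \Ind_H^G 1_H\rangle_G$. Both characters are real-valued, so there is no conjugation issue in the inner product.

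For part (b), I will interpret ${}^\sigma V$ as the representation with character $\sigma\circ\chi_V$. If $V$ is genuinely real, I work with its complexification, or more precisely the version defined over a splitting field. Applying (a), $\dim {}^\sigma V^H = \frac{1}{|H|}\sum_{h\in H}\sigma(\chi_V(h)) = \sigma\bigl(\frac{1}{|H|}\sum_{h}\chi_V(h)\bigr)$, using that $\sigma$ is additive and fixes $\bbQ$. The inner sum is the nonnegative integer $\dim V^H$, and $\sigma$ fixes integers, so the two dimensions agree.

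For the Adams-operation phrasing, I will note that Galois conjugation by $\sigma_k\colon \zeta\mapsto\zeta^k$ with $k$ coprime to $|G|$ sends $\chi(g)$ to $\chi(g^k)$. This is exactly the Adams operation $\psi^k$ on characters, so the two notions of class coincide and the dimension function is an invariant of either.

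The only point needing care is the setup: the Galois conjugate of a real representation should be defined through its character over $\bbQ^{\on{ab}}$. With that in place, the invariance is immediate, because the integer $\dim V^H$ is fixed by every field automorphism. I expect no genuine obstacle beyond fixing this convention.
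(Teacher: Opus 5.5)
Your proposal is correct and follows essentially the paper's argument. For (a), your trace computation with the averaging idempotent makes explicit the paper's remark that $V^H$ is the trivial isotypic part of $\Res^G_H V$, and Frobenius reciprocity gives the last equality; for (b), the argument is the same observation that $\sigma$ passes through the averaging formula and fixes the integer it outputs. The paper additionally notes that $\sigma$ commutes with $\nu(-)$, which settles the convention you flagged by guaranteeing that ${}^\sigma V$ is again a real representation of the same type, though, as you observe, this is not needed for the dimension count itself.
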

    \begin{proof}
        Observe that $V^H$ as vector space is equivalently the subspace of $\Res^G_H V$ isomorphic to a direct sum of trivial representations, from which (a) follows. For (b), note the action of $\sigma$ commutes with both $\langle -, -\rangle$ and $\nu(-)$, and since their outputs are integer-valued, $\sigma$ does not change the output.

    \end{proof}

    \begin{theorem}\label{prop:equiv_conditions}
        Let $V$ be a real representation of a finite group $G$. The following are equivalent.
        \begin{enumerate}
            \item There exists a subgroup $H$ of $G$ and $g \in N_G(H)$ such that $\det(\rho_{V^H}(g)) = -1$;
            \item There exist subgroups $K_1 \trianglelefteq K_2 \leq G$ with $[K_2:K_1] = 2$ and $\pi_V(K_1) \not\equiv \pi_V(K_2)$.
        \end{enumerate}
    \end{theorem}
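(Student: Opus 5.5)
The plan is to use the identity from \Cref{rmk:burnside}: for $H \leq G$ and $g \in N_G(H)$,
\[\det(\rho_{V^H}(g)) = (-1)^{\dim_V(H) - \dim_V(\langle H, g\rangle)}.\]
I would first re-justify it briefly. Since $g$ normalizes $H$, it acts on $V^H$ orthogonally, and $(V^H)^{g} = V^{\langle H,g\rangle}$. The determinant of an orthogonal map is $(-1)^{m}$, where $m$ is the multiplicity of the eigenvalue $-1$. The non-real eigenvalues come in conjugate pairs, so the orthogonal complement of the fixed space has dimension congruent to $m$ mod $2$. With this identity, condition (a) says exactly that $\pi_V(H) \neq \pi_V(\langle H,g\rangle)$ for some $H$ and $g \in N_G(H)$.

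For (b) $\Rightarrow$ (a): take $H = K_1$ and any $g \in K_2 \setminus K_1$. Then $g \in N_G(K_1)$ because $K_1 \trianglelefteq K_2$, and $\langle K_1, g\rangle = K_2$ because the index is $2$. The identity then gives $\det(\rho_{V^{K_1}}(g)) = -1$.

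For (a) $\Rightarrow$ (b), suppose $\det(\rho_{V^H}(g)) = -1$. Set $L = \langle H, g\rangle$, so $H \trianglelefteq L$ and $L/H$ is cyclic, generated by the image of $g$, of order $n$.
\begin{itemize}
\item Write $n = 2^a m$ with $m$ odd, and let $H' \leq L$ be the preimage of the unique subgroup of $L/H$ of order $m$.
\item The element $g^{2^a}$ lies in $H'$ and normalizes $H$. It acts on $V^H$ with odd order modulo the action of $H$, so its determinant there is $1$; hence $\pi_V(H) = \pi_V(H')$ by the identity applied to $g^{2^a}$. Equivalently, a determinant $-1$ on $V^H$ has to come from the $2$-part.
\item So $\pi_V(H') \neq \pi_V(L)$, and $L/H'$ is cyclic of order $2^a$ with $a \geq 1$.
\item Now take the chain $H' = M_0 < M_1 < \cdots < M_a = L$, where $M_i$ is the preimage of the subgroup of order $2^i$. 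Each step $M_{i-1} \trianglelefteq M_i$ has index $2$.
\item Since $\pi_V$ differs at the two ends of the chain, some consecutive pair $(K_1, K_2) = (M_{i-1}, M_i)$ has $\pi_V(K_1) \neq \pi_V(K_2)$.
\end{itemize}

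The only mildly delicate step is the odd-order reduction. I would handle it directly with the identity: $g^{2^a}$ normalizes $H$, its image in $N_G(H)/H$ has odd order, so its determinant on $V^H$ is a $\pm1$ of odd order, hence $1$. Alternatively, one can skip this step and just use the full chain through $L/H$, noting that any cyclic group has a normal series $H \leq H' \leq L$ whose bottom step has odd index. An odd-index step cannot change parity, by the same determinant argument. Either way, everything reduces to the displayed identity together with the observation that elements of odd order have determinant $1$.
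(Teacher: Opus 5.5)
Your proof is correct. For (b)$\Rightarrow$(a) it is the paper's argument. You phrase it through the identity of \Cref{rmk:burnside}, while the paper counts eigenspaces directly.

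For (a)$\Rightarrow$(b) the routes differ. The paper avoids the identity, noting only that one could use it. It replaces $g$ by its $2$-part and takes the explicit pair $K_1=\langle H,g^2\rangle$, $K_2=\langle H,g\rangle$. It then shows directly that the $-1$-eigenspace of $g$ on $V^H$ is fixed by $g^2$, hence lies in $V^{K_1}$. So $g$ acts on $V^{K_1}$ as an involution with odd-dimensional $-1$-eigenspace, which forces $g\notin K_1$, $[K_2:K_1]=2$, and the parity jump.

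You instead translate everything into parities, strip off the odd part of $L/H$, and telescope along a chain of index-$2$ steps. This only shows that some step jumps. The odd-part step implicitly uses $\langle H,g^{2^a}\rangle=H'$, which holds because the image of $g^{2^a}$ generates $H'/H$.

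Within your own framework that detour is unnecessary. Applying the identity to $g^2\in N_G(H)$ gives $(-1)^{\dim_V(H)-\dim_V(\langle H,g^2\rangle)}=\det(\rho_{V^H}(g))^2=1$. Hence $\pi_V(\langle H,g^2\rangle)=\pi_V(H)\neq\pi_V(\langle H,g\rangle)$. Since $g$ normalizes $\langle H,g^2\rangle$ and this subgroup has index at most $2$ in $\langle H,g\rangle$ (exactly $2$, by the parity difference), this single step is already the required pair. It is the paper's pair, and neither the odd-part reduction nor the chain is needed.

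Your approach has the merit that, once the identity is granted, everything is formal. The paper's argument is self-contained and names an explicit witness.
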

    \begin{proof}
        One may observe that this follows from the identity in \Cref{rmk:burnside}, however we give an independent proof for completeness.

        (a) $\implies$ (b): Let $H$ and $g \in N_G(H)$ satisfy $\det(\rho_{V^H}(g)) = -1$. Then $g\not\in H$. Write $g = g_2g_{2'}$, where $g_2,g_{2'}$ are the unique commuting $2$- and $2'$-elements respectively. Both are in fact powers of $g$, so both $g_2, g_{2'} \in N_G(H)$ as well. Then we must have $\det(\rho_{V^H}(g_2)) = -1$, since $g_{2'}$ has odd order. Replacing $g$ by $g_2$ allows us to assume $g$ has order a power of $2$.

        Set $ K_1  := \langle H, g^2\rangle$ and $K_2 := \langle H, g\rangle$. We have $H \triangleleft K_1$, and $V^{K_1} = (V^{H})^{g^2}$. The action of $g$ has $-1$ as an eigenvalue and all other eigenvalues are either 1 or are non-real and come in conjugate pairs. The $-1$-eigenspace of $g$ acting on $V^H$ is acted trivially upon by $g^2$.
        Then the action of $g$ on $V^{K_1}$ (which is well-defined, as $g \in N_G(K_1)$) is an involution, and moreover, the eigenspaces of $-1$ for the action of $g$ on $V^H$ and $V^{K_1}$ coincide. Moreover, this eigenspace has odd dimension, since $\det(\rho_{V^H}(g)) = -1$.

        We have $g \not\in K_1$, since otherwise, $g$ would act trivially on $V^{K_1}$. Since $g^2 \in K_1$, it follows that $[K_2:K_1] = 2$. Finally, $\dim_V(K_1) - \dim_V(K_2)$ is exactly the size of the eigenspace of $-1$ associated to the action of $g$ on $V^{K_1}$, which is odd, so we are done.

        (b) $\implies$ (a): Choose $g \in K_2 \setminus K_1$. We have $g \in N_G(K_1)$, but since $[K_2:K_1] = 2$, $g^2 \in K_1$ and therefore acts trivially on $V^{K_1}$. So $g$ acts via involution on $V^{K_1}$ and fixes $(V^{K_1})^g = V^{K_2}$. Now, note the dimension of the $-1$-eigenspace associated to $g$ acting on $V^{K_1}$ is exactly $\dim_V(K_1) - \dim_V(K_2)$, which is odd, and therefore $\det(\rho_{V^{K_1}}(g)) = -1.$

    \end{proof}

    Non-orientability can be detected on the solvable subgroups of $G$.

    \begin{proposition}\label{prop:solv_cond}
        Let $V$ be a real representation of a finite group $G$. If the parity function $\pi_V$ is nonconstant on the set of solvable subgroups of $G$, then the equivalent conditions of \Cref{prop:equiv_conditions} hold for $V$. One may choose $(K_1,K_2)$ to be solvable in this case.
    \end{proposition}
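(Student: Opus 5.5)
Since $\pi_V$ is a superclass function, nonconstancy on solvable subgroups means there is a solvable $H \le G$ with $\pi_V(H) \neq \pi_V(1)$; the trivial subgroup is solvable, so we can compare everything to it. The plan is to climb from $1$ to $H$ along a chain of subgroups whose consecutive steps are prime index normal inclusions. Somewhere along the chain the parity must flip. We then show that a flip can only occur at an index-$2$ step, which gives condition (b) of \Cref{prop:equiv_conditions} with $K_1, K_2 \le H$ solvable.

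\textbf{Step 1.} Since $H$ is solvable, it has a composition series
\[1 = H_0 \trianglelefteq H_1 \trianglelefteq \cdots \trianglelefteq H_n = H\]
with each quotient $H_{i+1}/H_i$ cyclic of prime order $p_i$. Every $H_i$ is solvable, being a subgroup of $H$. Because $\pi_V(H_0) \neq \pi_V(H_n)$, some index $i$ satisfies $\pi_V(H_i) \neq \pi_V(H_{i+1})$, i.e.\ $\dim V^{H_i} \not\equiv \dim V^{H_{i+1}} \pmod 2$.

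\textbf{Step 2.} We show that such a flip forces $p_i = 2$. Put $K_1 = H_i$ and $K_2 = H_{i+1}$, and pick $g \in K_2 \setminus K_1$. Since $K_1 \trianglelefteq K_2$, the element $g$ acts on $W := V^{K_1}$. Moreover $g^{p}$ lies in $K_1$ (with $p = p_i$), so it acts trivially on $W$, and $W^g = V^{K_2}$. Thus $W$ is a real representation of the cyclic group $\langle g\rangle/\langle g^p\rangle \cong C_p$.

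Suppose $p$ is odd. The eigenvalues of $g$ on $W$ are $p$-th roots of unity. Every eigenvalue other than $1$ is non-real, and these come in complex conjugate pairs. Hence $\dim W - \dim W^g$ is even, which contradicts the parity flip. Therefore $p = 2$, so $[K_2 : K_1] = 2$, and condition (b) of \Cref{prop:equiv_conditions} holds with the solvable pair $(K_1, K_2)$.

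\textbf{Step 3.} The equivalence in \Cref{prop:equiv_conditions} then yields condition (a). By its proof, one may take the witnessing subgroup to be $K_1$, which is solvable.

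No step is a serious obstacle. The only point requiring care is the odd prime case in Step 2: it uses that the $\langle g\rangle$-action on $V^{K_1}$ factors through $C_p$, and that a real representation of $C_p$ with $p$ odd has even-dimensional nontrivial part. This is the same eigenvalue-pairing argument already used in \Cref{rmk:burnside}.
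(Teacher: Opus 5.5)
Your proof is correct and essentially matches the paper's: the paper takes a minimal solvable $K_2$ with $\pi_V(K_2)\neq\pi_V(1)$ and a normal subgroup $K_1$ of prime index, whereas you find the parity flip along a composition series of $H$. In both, the odd-prime case is ruled out by the same argument that non-real eigenvalues come in conjugate pairs, which forces index $2$.
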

    \begin{proof}
        We have that there is a solvable subgroup $K_2$ such that $\pi_V(K_2) \not\equiv \pi_V(1)$. Choose $K_2$ minimal, then $K_2$ has a normal subgroup $K_1$ of index $p$ prime. By minimality $\pi_V(K_1) = \pi_V(1)$. If $p$ is odd, then a generator $g$ of $K_2/K_1$ would act on $V^{K_1}$ with fixed point space $V^{K_2}$. The associated eigenvalues for the action are all nontrivial $p$th roots of unity, which are necessarily complex and come in conjugate pairs. Therefore $\pi_V(K_1) = \pi_V(K_2)$, a contradiction. So we are left with $p=2$, and $(K_1, K_2)$ is a pair satisfying the conditions of \Cref{prop:equiv_conditions}(b).

    \end{proof}

    \begin{conjecture}\label{q:solv_cond}
        The converse of \Cref{prop:solv_cond} holds.
    \end{conjecture}

    \begin{remark}\label{rmk:conj_hard}
        In a computational verification using GAP, we found the converse of \Cref{prop:solv_cond} holds for groups up to order 2000 and all 88 finite simple groups for which the GAP package \texttt{TomLib} \cite{TomLib1.2.11} has data. This includes 12 sporadic groups, $M_{11}, M_{12}, M_{22}, M_{23},$ $ M_{24}, J_1, J_2, J_3, HS, McL, He,$ and $Co_3$. See the appendix for all supporting code.

        The difficulty we encountered in attempts to prove \Cref{q:solv_cond} arises from non-solvable groups such as \emph{perfect groups}, those with trivial abelianization (e.g., simple groups). The question boils down to asking if, given a unit $u \in A(G)^\times$ of the form $u = \Theta_G(V)$, if $u$ has mark 1 on all solvable subgroups of $G$, then $u = [G/G]$. When $G$ is perfect, this need not hold for general units $u \in A(G)^\times$; for instance, $A(A_5)^\times$ has a unit with mark 1 at all proper subgroups of $A_5$, and mark $-1$ at $A_5$. However, this unit $u$ is not in the image of the tom Dieck homomorphism.

        Moreover, this situation is not ruled out by considering Borel--Smith conditions \cite[Chapter III, Section 5]{tD87}. The Borel--Smith conditions are conditions the dimension function $\on{dim}_V \in \on{CF}(G)$ of a real representation of a finite group $G$ must satisfy, and if $G$ is nilpotent, then they precisely characterize the image \cite{DH81}. A Borel--Smith function, a function satisfying the conditions, is determined by its value at subgroups $H$ of $G$ such that $H^{\on{ab}}$ is cyclic; however if $G$ is perfect, this includes the value at $G$ itself, so one has no control over the value at $G$. Furthermore, a Borel--Smith function in the image of $\dim\colon \on{RO}(G)\to \CF(G)$ is determined by its value at cyclic subgroups, but again, one cannot use this to rule out the possibility of a value at a non-solvable subgroup being odd when all others are even. Indeed, \Cref{prop:cyclic_detection} combined with \Cref{ex:SO_ex} demonstrates that even if $\dim_V(C)$ is the same parity for all cyclic subgroups $C$ of $G$, $\dim_V(H)$ can change parity at another subgroup $H$.
    \end{remark}

    \begin{corollary}\label{cor:odd_deg_solv}
        Let $G$ be a solvable group and $V$ a real representation of odd dimension with $V^G = 0$. Then $V$ is non-orientable.
    \end{corollary}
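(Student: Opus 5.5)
The plan is to apply \Cref{prop:solv_cond} directly, with $G$ itself as the witnessing solvable subgroup. Since $G$ is solvable, every subgroup of $G$ is solvable, so the set of solvable subgroups contains both $1$ and $G$. It therefore suffices to show that $\pi_V$ takes different values at these two subgroups.

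The values are immediate. At the trivial subgroup, $\dim_V(1) = \dim V^1 = \dim V$, which is odd by hypothesis, so $\pi_V(1) = 1$. At the top, $\dim_V(G) = \dim V^G = 0$ by hypothesis, so $\pi_V(G) = 0$. Hence $\pi_V$ is nonconstant on the set of solvable subgroups of $G$.

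By \Cref{prop:solv_cond}, the equivalent conditions of \Cref{prop:equiv_conditions} then hold for $V$. In particular, condition (a) gives a subgroup $H \leq G$ and an element $g \in N_G(H)$ with $\det(\rho_{V^H}(g)) = -1$. By definition, this says exactly that $V$ (equivalently $S^V$) is non-orientable.

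There is no real obstacle here: the argument is a direct specialization of \Cref{prop:solv_cond}. The only point to check is that solvability of $G$ places both $1$ and $G$ in the relevant set of subgroups, which holds because subgroups of solvable groups are solvable.
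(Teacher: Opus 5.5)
Your proposal is correct and is essentially the paper's own proof: the paper applies \Cref{prop:solv_cond} using $\pi_V(1) \equiv 1$ and $\pi_V(G) \equiv 0 \bmod 2$. Your added remark that every subgroup of a solvable group is solvable, so both $1$ and $G$ lie in the relevant set, is the only step the paper leaves implicit.
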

    \begin{proof}
        Apply \Cref{prop:solv_cond}: we have $\pi_V(1) \equiv 1 \mod 2$ but $\pi_V(G) \equiv 0$ mod 2.
    \end{proof}

    \begin{remark}
        If a group $G$ satisfies that all of its real representations of real type are odd-dimensional, then $G$ is nirrorno by \Cref{cor:odd_deg_solv}. \cite[Theorem A]{Tie15} states that the groups $G$ for which every irreducible character of real type has odd degree satisfy $O^{2'}(G)$ solvable. \cite[Theorem A]{DNT08} further states that if in addition, $G$ has that every irreducible character of quaternion type has odd degree satisfy that $P \triangleleft G$, where $P$ is a 2-Sylow. The converses do not hold however; the question of classifying such groups precisely seems difficult.
    \end{remark}

    We make a minor observation which will not be relevant in the sequel, but may be of independent interest.

    \begin{remark}
        One can also prove a proposition similar to \Cref{prop:solv_cond} but replacing `solvable' with any family of subgroups $\calF$ satisfying that the following holds: if $1\in \calF$, and if $1 \neq H \in \calF$, then there exists a $N\trianglelefteq H$ with $N \in \calF$ and $[H:N] = p$.
        An easy induction argument shows that the set of solvable subgroups of $G$, which we denote $\on{Solv}(G)$, is the largest $\calF$ for which this property holds.
    \end{remark}

    If one replaces `solvable' with `cyclic,' we find a criterion for trivial global orientation behavior, recalling \Cref{def:triv_global}.

    \begin{proposition}\label{prop:cyclic_detection}
        Let $V$ be a real representation of a finite group $G$. The parity function $\pi_V$ is constant on the set of cyclic subgroups of $G$ if and only if $V$ has trivial global orientation behavior.
    \end{proposition}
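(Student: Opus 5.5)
The proof compares parities along the chain $1 \le \langle g\rangle$ for each $g\in G$, using the identity from \Cref{rmk:burnside} with $H = 1$: for every $g\in G$,
\[\det(\rho_V(g)) = (-1)^{\dim_V(1) - \dim_V(\langle g\rangle)}.\]
To make this self-contained, I would rederive it directly. The orthogonal transformation $\rho_V(g)$ has eigenvalues $1$, $-1$, and non-real roots of unity occurring in conjugate pairs. The non-real eigenvalues contribute an even-dimensional subspace with determinant $1$. Since $\dim V^{\langle g\rangle}$ is the multiplicity of the eigenvalue $1$, the dimension of the $(-1)$-eigenspace is congruent to $\dim V - \dim V^{\langle g\rangle}$ modulo $2$. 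That dimension determines the sign of $\det(\rho_V(g))$, which gives the displayed identity. Equivalently, $\det(\rho_V(g)) = 1$ if and only if $\pi_V(\langle g\rangle) = \pi_V(1)$.

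For the forward direction, suppose $\pi_V$ is constant on cyclic subgroups. The trivial subgroup is cyclic, so $\pi_V(\langle g\rangle) = \pi_V(1)$ for all $g$. The identity then gives $\det(\rho_V(g)) = 1$ for all $g\in G$, which is trivial global orientation behavior in the sense of \Cref{def:triv_global}.

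For the converse, suppose $\det(\rho_V(g)) = 1$ for all $g$. The identity gives $\pi_V(\langle g\rangle) = \pi_V(1)$ for every $g$. Every cyclic subgroup $C$ is of the form $\langle g\rangle$, so $\pi_V(C) = \pi_V(1)$ for all $C$, and $\pi_V$ is constant on cyclic subgroups.

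No step is a real obstacle; the only point needing care is the eigenvalue parity count, namely that non-real eigenvalues of a real orthogonal matrix pair off and contribute determinant $1$. This is the same argument used in the proof of \Cref{prop:equiv_conditions}.
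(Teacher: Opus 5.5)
Your proposal is correct and takes the same approach as the paper. Both specialize the identity $\det(\rho_V(g)) = (-1)^{\dim_V(1)-\dim_V(\langle g\rangle)}$ from \Cref{rmk:burnside} to $H=1$ and read off both directions. Your eigenvalue-pairing rederivation simply spells out the justification the paper gives in that remark.
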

    \begin{proof}
        This follows from the formula deduced in \Cref{rmk:burnside}. Setting $H = 1$, we have $\det(\rho_{V}(g)) = |\Theta_G(V)|/|\Theta_G(V)^{\langle g\rangle }|$. Now, $\det(\rho_V(g)) = 1$ for all $g \in G$ if and only if $|\Theta_G(V)|/|\Theta_G(V)^{\langle g\rangle }|$ is constant for all $g \in G$ if and only if $\pi_V(1)/\pi_V(\langle g\rangle)$ is constant for all $g \in G$ if and only if $\pi_V(\langle g\rangle) \equiv \pi_V(1)$ for all $g \in G$.

    \end{proof}

    \subsection{Strong nirrorno-ness}\Cref{prop:solv_cond} and \Cref{q:solv_cond} motivate the following definition.

    \begin{definition}\label{def:strong_nirrorno}
        We say an even order finite group $G$ is \emph{strong nirrorno} if the following condition holds: the set \[\{\pi_\bbR = 1\}\cup \bigcup_{V \in \calG(G)}\{\pi_V\}\subseteq \CF(\on{Solv}(G), \bbF_2)\] restricted to the solvable subgroups of $G$ is linearly independent (recall $\pi_V$ is Galois invariant by \Cref{prop:galois_invariance}). By \Cref{prop:solv_cond}, $G$ strongly nirrorno implies $G$ nirrorno.
    \end{definition}

    Strong nirrorno-ness allows us to easily characterize all non-orientable representations.

    \begin{proposition}\label{prop:all_non_orientables}
        Suppose a finite group $G$ is strong nirrorno. Then every non-orientable representation is of the form \[U \oplus \bigoplus_{V \in \mathsf{nir}(G)} a_V \cdot V,\] where $U$ is a sum of real representations of complex and quaternion type and trivial representations, each $a_V \in \bbN_{\geq 0}$, and at least one sum of $a_V$'s over a Galois conjugacy class of $\mathsf{nir}(G)$ is odd.
    \end{proposition}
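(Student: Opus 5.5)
Write an arbitrary real representation $W$ via its decomposition into irreducibles,
\[W \cong U \oplus \bigoplus_{V \in \mathsf{nir}(G)} a_V \cdot V,\]
where $U$ collects the trivial summands and the summands of complex and quaternion type. Since parity functions are additive ($\pi_{A\oplus B} = \pi_A + \pi_B$ in $\CF(G,\bbF_2)$), we get
\[\pi_W = \pi_U + \sum_{V\in\mathsf{nir}(G)} a_V\pi_V .\]
The summands of complex and quaternion type contribute the zero function. Indeed, for such an irreducible $X$ and any $H$, the complexification satisfies $\bbC\otimes_\bbR X^H = W'\oplus\overline{W'}$, as in the proof of \cite[Proposition 8.4]{GM26}. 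So $\dim X^H$ is even and $\pi_X = 0$. The trivial summands contribute $(\dim U^G)\cdot \pi_\bbR$, a constant. Finally, Galois-conjugate nirrors have the same parity function by \Cref{prop:galois_invariance}. Grouping by Galois orbit $[V]\in\calG(G)$ and setting $b_{[V]} := \sum_{V'\in[V]} a_{V'} \bmod 2$, we obtain
\[\pi_W = c\,\pi_\bbR + \sum_{[V]\in\calG(G)} b_{[V]}\,\pi_V, \qquad c\in\bbF_2 .\]

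The key step is to show that $W$ is non-orientable if and only if some $b_{[V]}\neq 0$, which is what the statement asserts.

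For the ``if'' direction, suppose some $b_{[V]}=1$. Restrict $\pi_W$ to $\on{Solv}(G)$. Strong nirrorno-ness says the set $\{\pi_\bbR\}\cup\{\pi_V : V \in \calG(G)\}$ is linearly independent on solvable subgroups. Hence $\pi_W|_{\on{Solv}(G)}$ is not in the span of $\pi_\bbR$, i.e.\ it is nonconstant on solvable subgroups. \Cref{prop:solv_cond} then gives non-orientability.

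For the ``only if'' direction, suppose all $b_{[V]}=0$. Then $\pi_W = c\,\pi_\bbR$ is constant on all subgroups of $G$. Condition (b) of \Cref{prop:equiv_conditions} therefore fails for every pair $K_1\trianglelefteq K_2$, and so $W$ is orientable. Note this direction needs the computation on all subgroups, not just solvable ones, which the additivity argument above provides.

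The main obstacle is a bookkeeping one: the statement says $U$ consists of complex- or quaternion-type and trivial summands. I must make sure $\pi_U$ is constant, not merely zero on non-trivial parts. The trivial summand contributes the constant $\dim U^G \bmod 2$, which lies in the span of $\pi_\bbR$; this is exactly why $\pi_\bbR$ is included in the independent set. No other subtlety arises, since linear independence is used only to detect nonconstancy on $\on{Solv}(G)$.
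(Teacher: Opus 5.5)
Your proof is correct and takes the same route as the paper: additivity of parity functions, vanishing of the complex/quaternionic contributions, Galois invariance to collapse each orbit to one $\pi_V$, and then linear independence on $\on{Solv}(G)$ plus \Cref{prop:solv_cond} to get non-orientability when some orbit sum is odd. Your write-up is more complete than the paper's in two respects. The paper's criterion says the parity function is ``nonzero'', whereas you correctly use ``not in the span of $\pi_\bbR$'' (the trivial representation has nonzero but constant parity). The paper's argument also ends with ``odd orbit sum $\Rightarrow$ non-orientable'', while you explicitly prove the direction the statement literally asserts, non-orientable $\Rightarrow$ some odd orbit sum, via \Cref{prop:equiv_conditions}; as your argument shows, that direction needs no strong nirrorno hypothesis.
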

    \begin{proof}
        First, $G$ strong nirrorno implies that satisfies the unrestricted set of parity functions \[\{\pi_\bbR = 1\}\cup \bigcup_{V \in \calG(G)}\{\pi_V\}\subseteq \CF(G, \bbF_2)\] is linearly independent. This fact combined with \Cref{prop:galois_invariance} asserts that two $V_1, V_2 \in \mathsf{nir}(G)$ have the same parity function if and only if they are Galois-conjugate. Therefore, from the assumed linear independence, the parity function of the representation is nonzero if and only if at least the sum of coefficients for at least one Galois conjugacy class of $\mathsf{nir}(G)$ is odd. Since $G$ is also strongly nirrorno, we have that the parity function is nonzero if and only if it is nonzero restricted to $\on{Solv}(G)$. Therefore, \Cref{prop:solv_cond} implies the representation is non-orientable, as desired.

    \end{proof}

    Though the tom Dieck homomorphism (which identifies with the parity homomorphism $\pi$) has been considered in detail \cite{MM83, Tor84, Yos90, Y05}, it seems that the question of linear independence was not considered. We find by brute force that being strong nirrorno is not equivalent to being nirrorno. In fact, we determine that the parity functions need not be linearly independent even before restricting to solvable subgroups.

    \begin{theorem}\label{thm:str_nir_not_nir}
        The solvable group $G:= C_3 \rtimes \on{GL}_2(\bbF_3)$, \emph{\texttt{SmallGroup(144,125)}} is nirrorno, but not strongly nirrorno.
    \end{theorem}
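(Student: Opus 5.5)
The plan is to establish both halves through explicit finite computations with the table of marks and the character table of $G = C_3 \rtimes \on{GL}_2(\bbF_3)$, \texttt{SmallGroup(144,125)}. The paper itself says this result was obtained computationally in GAP, so the proof will be a structured verification rather than a conceptual argument.

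First, compute the irreducible complex characters of $G$ together with their Frobenius--Schur indicators $\nu(\chi)$. From these, extract $\mathsf{nir}(G)$: the nontrivial characters with $\nu(\chi) = 1$, each of which is the complexification of a real representation of real type. Group them into Galois orbits to obtain $\calG(G)$. Next, list representatives of the conjugacy classes of subgroups $H \leq G$. Since $G$ is solvable, $\on{Solv}(G)$ consists of all subgroups, so restricting to solvable subgroups has no effect. For each $V \in \calG(G)$, compute the parity function
\[\pi_V(H) = \langle \Res^G_H \chi_V, 1_H\rangle_H \bmod 2\]
using \Cref{prop:galois_invariance}(a).

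For the nirrorno claim, I will treat each nirror $V$ separately. By \Cref{prop:solv_cond}, and because every subgroup of $G$ is solvable, it is enough to show that $\pi_V$ is nonconstant. Concretely, I will find one subgroup $H$ with $\dim V^H \not\equiv \dim V \pmod 2$; the subgroup $G$ itself will often do, since $V^G = 0$. The case of odd-degree nirrors follows directly from \Cref{cor:odd_deg_solv}. For even-degree nirrors I will exhibit explicit subgroups, for example a Sylow $2$-subgroup or suitable small $2$-subgroups, at which the fixed-point dimension is odd. The reverse implication, that non-nirrors are orientable, is already covered by the proposition on complex and quaternionic type. So nirrorno reduces to finitely many parity checks.

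For the failure of strong nirrorno, I will exhibit an explicit nontrivial $\bbF_2$-linear dependence among $\{\pi_\bbR\} \cup \{\pi_V : V \in \calG(G)\}$ in $\CF(G,\bbF_2)$. The natural candidate is a relation of the form $\pi_{V_1} + \pi_{V_2} = \pi_{V_3}$, or one involving $\pi_\bbR$, among distinct Galois classes. One source of such a relation is $\chi_{V_1}\chi_{V_2}$ decomposing, modulo complex- and quaternion-type and even-multiplicity pieces, into something with the appropriate parity. I would find the relation by computing the $\bbF_2$-rank of the matrix whose rows are the parity vectors indexed by subgroup classes, and checking that this rank is smaller than $1 + |\calG(G)|$.

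I expect the main obstacle to be presenting this dependence in a way that can be checked by hand rather than only by GAP. My first attempt will be to identify the dependent nirrors as inflations from, or products with, characters of the quotients $\on{GL}_2(\bbF_3)$ and $S_3$, and to verify the relation on the small set of subgroup classes where the values could differ. The minimality assertion made in the introduction is not part of this theorem, so I would not address it here.
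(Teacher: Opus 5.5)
Your plan matches the paper's proof, which is also a direct GAP computation: the paper gets nirrorno from \Cref{prop:solv_cond} because each of the $12$ nirrors has a nonzero parity function (hence nonconstant, since $\pi_V(G)=0$), and it gets the failure of strong nirrorno because the $13$ parity functions of the nirrors and $\bbR$ span only a $12$-dimensional $\bbF_2$-space. One caution about where you plan to look for witnesses: the four degree-$4$ nirrors (the one inflated from $\on{GL}_2(\bbF_3)$ and the three induced from $C_3\times\on{SL}_2(\bbF_3)$) all restrict to the Sylow $2$-subgroup $SD_{16}$ as $\psi+\overline{\psi}$, so no $2$-subgroup detects them; their witnesses must have order divisible by $3$, and suitable subgroups isomorphic to $S_3$ work.
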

    \begin{proof}
        This is obtained from explicit computation using \cite{GAP4}, see \Cref{sec:parity_fns} for all code and output. In short, $G$ is nirrorno, as it is solvable and all 12 of its nirrors $V$ satisfy $\pi_V \not\equiv 0$. However, the span of the 13 parity functions (associated to the 12 nirrors and $\bbR$) only has $\bbF_2$-dimension 12.

    \end{proof}

    \section{Non-orientability for 2-groups}

    We answer \Cref{q:the_question} in the affirmative for any $2$-group $G$. First, recall (see \cite[Chapter 5, Theorem 4.10]{Gor68} or \cite[Section 9.3]{Bou10}) that for any prime $p$, if a $p$-group $G$ does not contain a normal subgroup isomorphic of $p$-rank at least 2, i.e., $G$ has \emph{normal $p$-rank 1}, then $G$ is either a cyclic, dihedral (of order at least 16), semidihedral, or quaternion group.

    \begin{lemma}\label{lem:norm_p_rk_1}
        Let $G$ be a cyclic, dihedral, semidihedral, or quaternion $2$-group. Then every faithful nirror of $G$ is non-orientable.
    \end{lemma}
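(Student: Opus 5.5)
The plan is to treat the four families separately. Most cases turn out to be vacuous, and the one substantive case is handled by a single dimension count fed into \Cref{prop:equiv_conditions}(b). The starting point is that all irreducible complex representations of these groups have degree $1$ or $2$. Let $V$ be a faithful nirror. Then $\bbC\otimes_\bbR V$ is irreducible, so $\dim_\bbR V\in\{1,2\}$.

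\emph{Degree one.} If $\dim V=1$, then $G$ embeds in $O(1)=\{\pm1\}$, so $G\cong C_2$ and $V$ is the sign representation. Then $\dim_V(1)=1$ and $\dim_V(C_2)=0$. The pair $K_1=1\trianglelefteq K_2=G$ has index $2$ and satisfies \Cref{prop:equiv_conditions}(b), so $V$ is non-orientable.

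\emph{Degree two.} Here $\rho_V\colon G\hookrightarrow O(2)$ is faithful. The key observation is the following. Suppose $G$ contains a non-central involution $s$. Then $\rho_V(s)$ is an element of $O(2)$ of order $2$ different from $\pm I$, so its eigenvalues are $1$ and $-1$. Hence $\dim_V(\langle s\rangle)=1$, while $\dim_V(1)=2$. The pair $K_1=1\trianglelefteq K_2=\langle s\rangle$ then satisfies \Cref{prop:equiv_conditions}(b), so $V$ is non-orientable. This settles the dihedral case uniformly, since any reflection $s$ is a non-central involution. It also settles the semidihedral case, where $s$ with $srs^{-1}=r^{2^{n-2}-1}$ is a non-central involution, should a faithful nirror exist there at all.

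\emph{Remaining cases: no faithful two-dimensional nirror.} It remains to show that $C_{2^n}$ ($n\ge2$) and $Q_{2^n}$ have no faithful two-dimensional nirror. For the cyclic group, a faithful $2$-dimensional real irreducible is a rotation by a primitive $2^n$-th root of unity. Its complexification splits as $\chi\oplus\overline\chi$ with $\chi\neq\overline\chi$, so it is of complex type. For $Q_{2^n}$, every faithful irreducible character has degree $2$ and sends the central involution $z$ to $-I$. I would check via the Frobenius--Schur indicator that such a character has $\nu(\chi)=-1$ for the real-valued ones and $\nu(\chi)=0$ otherwise. A cleaner route is to note that the only involution of $Q_{2^n}$ is $z$, which lies in the kernel of every real-type irreducible; a real representation $\chi$ of real type makes $\rho(G)\subseteq O(2)$, and any non-abelian finite subgroup of $O(2)$ is dihedral, so it contains reflections, which are non-central involutions that $Q_{2^n}$ cannot supply faithfully. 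This argument applies equally to the cyclic case: finite subgroups of $O(2)$ are cyclic rotation groups (complex type unless of order $\le2$) or dihedral. Hence a faithful nirror forces $G$ to be $C_2$ or dihedral. For the semidihedral group this shows there are in fact no faithful $2$-dimensional nirrors, so that case is vacuous as well.

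\emph{Main obstacle.} The step needing the most care is this classification of faithful real-type $2$-dimensional representations via finite subgroups of $O(2)$. One must note that an abelian image in $O(2)$ that is not of order $\le2$ acts irreducibly only by rotations, which are of complex type. Once that is in place, the rest is the one-line eigenvalue computation above.
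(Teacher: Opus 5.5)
Your proof is correct. Its substantive step is the same as the paper's dihedral case: your pair $1 \trianglelefteq \langle s\rangle$ in \Cref{prop:equiv_conditions}(b) is the same witness as $H=1$, $g=s$ a reflection with $\det\rho_V(s)=-1$. That is what the paper means when it says the reflection representations are not orientation-preserving.

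Where you differ is in disposing of the other families.
\begin{itemize}
\item The paper goes through them one at a time using the known character theory of these groups (via Bouc). The faithful irreducibles of $C_{2^n}$ for $n>1$ and of $SD_{2^n}$ are of complex type, and those of $Q_{2^n}$ are of quaternion type, so there is nothing to check.
\item You instead bound $\dim_\bbR V\le 2$ using the cyclic subgroup of index $2$. Then you use that every finite subgroup of $O(2)$ is cyclic or dihedral. This forces $G\cong C_2$ or $G$ dihedral whenever a faithful nirror exists, and needs no character tables.
\end{itemize}

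Your route is more self-contained and slightly more general. Combined with the reflection argument, it shows that any faithful $2$-dimensional real representation of a non-abelian finite group is non-orientable. It also covers $D_8$ with no extra work, whereas the paper's list is stated for $n\ge 4$. The paper's route has the advantage of identifying the actual type of the faithful irreducibles in each family. That is the form in which the lemma is reused in \Cref{cor:basis_of_burnside}.

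One expository point concerns the quaternion paragraph. You state that $z$ lies in the kernel of every real-type irreducible as if it were an input, but that statement is equivalent to what you are proving. It is established only by the $O(2)$ argument that follows: a faithful image would be a non-abelian subgroup of $O(2)$, hence dihedral, hence would contain a non-central involution. So present it as a consequence rather than a premise.
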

    \begin{proof}
        The following can be checked explicitly, see e.g. \cite[Section 9.3]{Bou10}.
        \begin{itemize}
            \item If $G = C_2$, then its unique faithful nirror is the sign representation, which is non-orientable. If $G = C_{2^n}$ for $n > 1$, then $G$ contains no faithful nirrors, as its faithful irreducible representation has complex type.
            \item If $G = D_{2^n}$ for $n \geq 4$, then the faithful nirrors are 2-dimensional reflection representations, which are not orientation-preserving.
            \item If $G = SD_{2^n}$ for $n \geq 4$, then $G$ has no faithful nirrors, as the only faithful irreducible real representations of $G$ are of complex type. Indeed, $G$ has irreducible faithful complex representations of degree 2 whose character values live in $\bbQ(\zeta)/\bbQ$ for $\zeta$ a primitive $2^{n-1}$th root of unity; explicitly, $\zeta - \overline{\zeta}$ is a character value.
            \item If $G = Q_{2^n}$ for $n \geq 3$, the faithful irreducible complex representations of $G$ are of quaternion type, so there are no faithful nirrors of $G$.
        \end{itemize}
    \end{proof}

    The crux of the matter is that any 2-group with normal $p$-rank at least 2 necessarily answers \Cref{q:the_question} in the alternative.

    \begin{theorem}\label{thm:2_grp_nirrorno}
        Let $G$ be a finite 2-group. Then $G$ is nirrorno.
    \end{theorem}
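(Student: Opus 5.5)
Since complex and quaternionic type irreducibles and the trivial representation are orientable, it suffices to show that every nirror $V$ of a $2$-group $G$ is non-orientable. I would argue by induction on $|G|$. Let $K = \ker \rho_V$. If $K \neq 1$, then $V$ is inflated from a nirror $\bar V$ of $G/K$. The fixed-point spaces $V^H$ and $\bar V^{HK/K}$ agree for $H \geq K$, and $N_G(H)$ maps into $N_{G/K}(HK/K)$. So a witness $(\bar H, \bar g)$ to non-orientability of $\bar V$ (given by induction) lifts to a witness $(H,g)$ for $V$, with $H$ the preimage of $\bar H$. Hence we may assume $V$ is faithful. If $G$ has normal $2$-rank $1$, then $G$ is cyclic, dihedral, semidihedral or quaternion, and \Cref{lem:norm_p_rk_1} finishes. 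So the real case is a faithful nirror $V$ of a $2$-group $G$ containing a normal subgroup $E \cong C_2 \times C_2$.

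In that case I would use Clifford theory together with the solvable detection criterion \Cref{prop:solv_cond}. Since $G$ is a $2$-group, everything is solvable, so it suffices to show $\pi_V$ is nonconstant. Here $\pi_V(1) = \dim V \bmod 2$ and $\pi_V(G) = 0$, since $V$ is nontrivial irreducible. If $\dim V = 1$, then $V$ is a sign character and $\pi_V(1)=1\neq 0=\pi_V(G)$, so we are done. Otherwise $\dim V$ is even, being a power of $2$ because $\bbC\otimes V$ is absolutely irreducible. Restrict $V$ to $E$: it decomposes into isotypic components for the real characters $\lambda$ of $E$. Because $V$ is faithful, the trivial character of $E$ cannot be the only constituent. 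Pick a nontrivial $\lambda$ occurring, with stabilizer $T = \on{Stab}_G(\lambda)$ of index $1$ or $2$, since $G$ permutes the three nontrivial characters of $E$ with orbits of size $1$ or $2$. By Clifford theory, $V = \Ind_T^G W$ with $W$ the $\lambda$-isotypic part, an irreducible real-type representation of $T$ (real type is preserved under this correspondence because the Clifford correspondence respects Frobenius--Schur indicators when inducing from the inertia group). Then I would compute fixed points via Mackey: $\dim V^H$ for suitable $H$ relates to $\dim W^{H'}$. Setting $E_0 = \ker\lambda \cong C_2$, $W$ restricted to $E$ is $\lambda$-isotypic, so $W$ is a faithful-on-$E/E_0$ representation of $T$ with $W^E = 0$ but $W^{E_0} = W$.

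The intended plan is to find a subgroup $H$ with $\pi_V(H) \neq \pi_V(H')$ for an index-$2$ pair. Concretely, I would pass to $W$ viewed as a representation of $T/E_0$, where it is again a real-type irreducible that is nontrivial, since $E/E_0$ acts by $-1$. Using $|T/E_0| < |G|$ and induction, $W$ is non-orientable for $T/E_0$, so by \Cref{prop:equiv_conditions} its parity function is nonconstant on subgroups of $T$ containing $E_0$. Then I would transport this to $V$ via the Mackey formula $\dim V^H = \sum_{TgH} \dim W^{T\cap {}^gH}$. The \textbf{main obstacle} is this transport step. When $[G:T]=2$, the Mackey sum can contribute two equal terms that cancel mod $2$, destroying the parity difference. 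I would first try to choose $H \leq T$ containing $E$, or containing $E_0$ and normalized suitably, so that the double coset terms are $\dim W^{H}$ and $\dim W^{{}^xH}$ with ${}^xH \supseteq E$ forcing the second to vanish, because $W^E = 0$. This isolates a single parity-carrying term. If this fails, I would instead choose $\lambda$ $G$-stable (possible when some nontrivial character of $E$ is central-fixed, e.g.\ using a central involution $z\in E$ and $\lambda$ with $\lambda(z)=1$, whose orbit has size $1$ if $G$ fixes it), so that $T = G$ and the reduction to $G/E_0$ is direct. Finding such a stable $\lambda$, or handling the induced case, is where the real work lies.
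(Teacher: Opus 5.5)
Your setup is the paper's: reduce to $V$ faithful containing a normal $E\cong C_2\times C_2$, pass to the index-$2$ inertia group $T$ of a constituent $\lambda$ of $\Res^G_E V$, and apply induction to $W$ viewed as a representation of $T/E_0$, where $E_0=\ker\lambda$. In the paper's notation, $E=\langle a,z\rangle$, $T=C_G(a)$, $E_0=\langle a\rangle$ and $W=V^+$.

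\textbf{The gap.} The transfer step, which you flag as the main obstacle and leave open, is the crux of the proof. Neither of your two proposed ways to close it works.
\begin{enumerate}
\item From $H\supseteq E_0$ you only get ${}^xH\supseteq {}^xE_0$, not ${}^xH\supseteq E$. If you instead force $H\supseteq E$ (so ${}^xH\supseteq E$, since $E$ is normal), then $W^H=0$ as well. Both Mackey terms vanish and no parity information survives.
\item The fallback of a $G$-stable $\lambda$ cannot occur for faithful $V$. The $\lambda$-isotypic part would be a nonzero $\bbR G$-submodule, hence all of $V$, so $\ker\lambda\neq 1$ would act trivially. Also, because $\End_{\bbR G}(V)\cong\bbR$ and $V$ is faithful, the central involution $z\in E$ acts by $-1$, so no $\lambda$ with $\lambda(z)=1$ is a constituent.
\end{enumerate}
Consequently $[G:T]=2$ always, and the induced case is the only case.

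\textbf{The missing observation.} No cancellation occurs once you restrict to subgroups containing $E_0$, and the inductive witnesses automatically contain $E_0$. Take $x\in G\setminus T$. Then ${}^x\lambda\neq\lambda$. A nontrivial character of $C_2\times C_2$ is determined by its kernel, so $\ker({}^x\lambda)={}^xE_0\neq E_0$. Hence ${}^x\lambda$ is nontrivial on $E_0$, and $E_0$ acts by $-1$ on the complementary summand $xW$ in $V=W\oplus xW$. It follows that $V^{E_0}=W$ as $T$-modules. For every $H$ with $E_0\le H\le T$ you therefore get $V^H=W^H$, with the same action of $N_T(H)$.

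\textbf{Finishing the argument.} Induction applied to $W$ as a nontrivial irreducible real-type representation of $T/E_0$ gives, via \Cref{prop:equiv_conditions}, subgroups $E_0\le K_1\trianglelefteq K_2\le T$ with $[K_2:K_1]=2$ and $\dim W^{K_1}\not\equiv \dim W^{K_2}\pmod 2$. Since $\dim V^{K_i}=\dim W^{K_i}$, the same pair witnesses non-orientability of $V$. In the paper this is the one-line remark that $(V^-)^H=0$ because $a\in H$ acts by $-1$ on $V^-$. With this inserted your argument is complete; as written, it does not yet prove the theorem.
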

    \begin{proof}
        We act by induction on $|G|$; the base cases of $|G| = 1$ is vacuous and $|G|=2$ is handled in \Cref{lem:norm_p_rk_1}. Let $V$ be a nirror of $G$ and write $\rho_V$ for its associated orthogonal representation. We may assume $V$ is faithful by replacing $G$ with $G/\ker(V)$.  If $\det(\rho_)V$ is nontrivial, we are done, so assume $\rho_V\colon G \to O(V)$ has image in $SO(V)$.

        Since $V$ is of real type, $\End_{\bbR G}(V) \cong \bbR$. Since $G$ is nilpotent, it has nontrivial center $Z := Z(G)$, and therefore has normal $p$-rank at least 1. If $G$ has normal $p$-rank exactly 1, the result follows from \Cref{lem:norm_p_rk_1}, so we assume $G$ has normal $p$-rank at least 2. By centrality, any $z \in Z$ induces a $\bbR G$-module homomorphism of $V$, hence the action of $z$ is $\pm I$. However, since $V$ is faithful, we must have $|Z| = 2$ and the unique nontrivial element $z \in Z$ is $\rho_V(z) = -I$.

        Now, since $G$ has a normal subgroup $N \cong V_4$, we may write $N = \langle a, z\rangle$ with $a$ noncentral of order 2. For any $g \in G$, ${}^g a \in \{a, az\}$, and since $a$ is noncentral, there exists $g \in G$ such that ${}^ga = az$. Set $C = C_G(a)$, then $[G:C] = 2$ by the orbit-stabilizer theorem. Now $\rho_V(a)$ is an involution necessarily different than $\rho_V(z) = -I$. We obtain an eigenspace decomposition \[V = V^+\oplus V^-\] associated to $\rho_V(a)$ and its eigenvalues (which are all $\pm 1$). It follows that $C$ preserves $V^+$ and $V^-$ by centrality, but any $g \in G\setminus C$ induces isomorphisms $V^+ \to V^-$ and $V^- \to V^+$.

        First, we claim that $V^+$ is an irreducible real $C$-representation of real type. Indeed, if there exists a nonzero $\bbR C$-submodule $W \subsetneq V^+$, then for $g \in G \setminus C$, we have that $W \oplus gW$ is a proper $\bbR G$-submodule of $V$, contradicting irreducibility of $V$. Then, one may check that we obtain a bijection $\End_{\bbR C}(V^+) \to \End_{\bbR G}(V) \cong \bbR$ given by sending a homomorphism $f \in \End_{\bbR C}(V^+)$ to the homomorphism $f \oplus {}^gf \in \End_{\bbR G}(V)$, with inverse given by restriction to $C$ then projection onto $V^+$. Thus the claim is shown.

        Now $a \in N$ acts trivially on $V^+$ by construction, so $V^+$ is an irreducible real $C/\langle a\rangle$-representation of real type. By induction, $\overline{C} := C/\langle a\rangle$ is nirrorno, so there exists a $\overline{H} \leq \overline{C}$ and $\overline{g} \in N_{\overline{C}}(\overline{H})$ for which $\det(\rho_{V^+}(\overline{g})) = -1$. Let $H \leq G$ be the preimage of $\overline{H}$ and $g \in C$ be a lift of $\overline{g}$. Then $g \in N_C(H)$. As $a \in H$ acts as $-1$ on $V^-$ by construction, we have $(V^-)^H = 0$. Therefore, \[V^H = (V^+)^H \oplus (V^-)^H = (V^+)^H,\] and the deflation of $\Res^G_C (V^+)^H$ to $N_{\overline{C}}(\overline{H})$ is exactly $(V^+)^{\overline{H}}$, so \[\det(\rho_{V^H}(g)) = \det(\rho_{(V^+)^H}(g)) = \det(\rho_{(V^+)^{\overline{H}}}(\overline{g})) = -1,\] as desired.

    \end{proof}

    % In fact, there is another way to show this, but indirectly. We assume familiarity with the rational Schur index $m_\bbQ(\chi)$ of a complex representation $\chi$, see \cite[Section 74]{CR81}

    % \begin{theorem}
        %     Let $G$ be a 2-group and let $\chi$ be an irreducible complex character of real type. Then $m_\bbq(\chi) = 1$, hence \[\sum_{\sigma \in \Gal(\bbQ(\chi)/\bbQ)} {}^\sigma \chi\] is the character of a rational irreducible representation of $G$.
        % \end{theorem}
    % \begin{proof}
        %     By The Brauser--Speiser theorem \cite[74.27]{CR81}, $m_\bbQ(\chi) \in \{1,2\}$. Suppose for contradiction $m_\bbQ(\chi) = 2$. Let $A$ be the simple constituent of $\bbQ(\chi)G$ associated to $\chi$, which necessarily is a quaternion algebra over $k = \bbQ(\chi)$. Then $k$ is a subfield of $\bbQ(\zeta_{2^n})$ for some sufficiently large $n$. Since $\bbQ(\zeta_{2^n})/\bbQ$ is a cyclic extension, $k/\bbQ$ is also a cyclic extension. The class of $[A]$ in the Brauer group has order 2
        % \end{proof}

    \begin{corollary}\label{cor:basis_of_burnside}
        Let $G$ be a finite $p$-group. Then \[\{-[G/G]\}\cup \bigcup_{V \in \calG(G)}\{\pi_V\}\] is a basis of $A(G)^\times$. In particular, $G$ is strongly nirrorno.
    \end{corollary}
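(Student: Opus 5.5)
For $p$ odd the statement is immediate. $G$ has odd order, so $A(G)^\times=\{\pm[G/G]\}$, and $G$ has no nirrors, so $\calG(G)=\emptyset$. We may therefore assume $p=2$. The plan is to show that the given set spans $A(G)^\times$ and that its cardinality equals the $\bbF_2$-rank of $A(G)^\times$. Throughout I identify $\CF(G)^\times$ with $\CF(G,\bbF_2)$ as in \Cref{rmk:burnside}, so that $-[G/G]$ corresponds to $\pi_\bbR$, the constant function $1$.

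\textbf{Spanning.} By \cite{Tor84,Y05}, the tom Dieck homomorphism $\Theta_G\colon\RO(G)\to A(G)^\times$ is surjective for $p$-groups. Hence $A(G)^\times$ is spanned by the $\pi_V$ with $V$ running over the irreducible real representations. If $V$ has complex type, then $\bbC\otimes_\bbR V^H=W^H\oplus\overline{W}^H$ with both summands of the same dimension, so $\dim V^H$ is even and $\pi_V=0$. If $V$ has quaternion type, $V^H$ is an $\bbH$-subspace, so again $\pi_V=0$. The trivial representation contributes $\pi_\bbR$, that is, $-[G/G]$. Galois-conjugate nirrors have equal parity functions by \Cref{prop:galois_invariance}. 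So $\{-[G/G]\}\cup\{\pi_V: V\in\calG(G)\}$ spans $A(G)^\times$.

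\textbf{Rank count.} It remains to show that $\dim_{\bbF_2}A(G)^\times=1+|\calG(G)|$. I would take this from Bouc's description of $A(G)^\times$ for $2$-groups \cite[Chapter 11]{Bou10}. Via Roquette's theorem and genetic sections, every rational irreducible representation of $G$ has the form $\Ind_S^G\Inf_{S/T}^S W$, where $S/T$ has normal $2$-rank $1$ and $W$ is faithful. Bouc's result gives $\dim A(G)^\times$ as $1$ plus the number of rational irreducibles whose genetic type $S/T$ is $C_2$ or $D_{2^n}$ with $n\ge 4$.

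I then need to identify these rational irreducibles with the Galois orbits of nirrors. The steps are:
\begin{enumerate}
\item For $2$-groups, the rational Schur index of a real-type character is $1$. This gives a bijection between Galois orbits of real-type complex irreducibles and rational irreducibles whose complexification has real-type constituents.
\item Induction from the genetic section preserves the type (real, complex or quaternion) of the constituents. This is the point where Bouc's genetic bases are needed.
\item By \Cref{lem:norm_p_rk_1}, the groups of normal $2$-rank $1$ admitting a faithful nirror are exactly $C_2$ and $D_{2^n}$ with $n\ge4$. For $C_{2^n}$ with $n>1$, $SD_{2^n}$ and $Q_{2^n}$, the faithful irreducibles have complex or quaternion type.
\end{enumerate}
Together these identify the genetic types counted by Bouc with the real-type rational irreducibles, which in turn correspond to $\{\bbR\}\cup\calG(G)$.

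\textbf{Conclusion and main obstacle.} A spanning set of size $1+|\calG(G)|$ in an $\bbF_2$-space of that dimension is a basis. Every subgroup of a $2$-group is solvable, so $\on{Solv}(G)$ is the set of all subgroups and no restriction occurs. Linear independence is therefore exactly the strong nirrorno condition. The main obstacle is the count in the previous paragraph, specifically step (b), checking that Bouc's genetic types with nonzero unit contribution are precisely the real-type, nontrivial ones. If this bookkeeping fails, my fallback is to prove independence directly by induction on $|G|$. This would mimic \Cref{thm:2_grp_nirrorno}: restrict a putative relation to $C=C_G(a)$ and deflate to $C/\langle a\rangle$. However, controlling sums of several nirrors, rather than a single one, seems harder than the single-nirror argument given there.
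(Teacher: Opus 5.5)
Your proposal is correct and follows the paper's proof: spanning comes from surjectivity of $\Theta_G$, with complex- and quaternion-type summands and differences of Galois conjugates lying in the kernel, and the rank count $1+|\calG(G)|$ comes from Bouc's formula for $\dim_{\bbF_2}A(G)^\times$ combined with the Roquette--Bouc description of rational irreducibles as $\Ind\Inf^G_{T/S}V$. The step (b) that you flag as the main obstacle is supplied in the paper by \cite[Theorem 9.4.1(b)]{Bou10}, the isomorphism $\End_{\bbQ G}(\Ind\Inf^G_{T/S}V)\cong\End_{\bbQ[T/S]}(V)$; after applying $\bbR\otimes_\bbQ-$ it transfers the real/complex/quaternion type from $T/S$ to $G$, so the fallback induction is unnecessary, and so is the Schur index in (a), since Galois orbits on $\Irr(G)$ always biject with rational irreducibles.
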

    \begin{proof}
        The tom Dieck homomorphism $\Theta_G$ is surjective by \cite{Tor84, Y05}. However, any complex or quaternion type irreducible real representation belongs to $\ker(\Theta_G)$, as it splits into a sum of Galois conjugates upon extension of scalars to $\bbC$, hence its dimension function is even-valued. Moreover, by \Cref{prop:galois_invariance}, the difference of two Galois conjugates belongs to $\ker(\Theta_G)$. Therefore, it suffices to show that the images of the $\pi_V$ with $V \in \calG(G)$ along with $\pi_\bbR = -[G/G]$ are linearly independent.

        This follows by dimension count and \cite[Theorem 8.5]{Bou07}, which states that the $\bbF_2$-rank of $A(G)^\times$ is the number of rational irreducible representations of $G$ whose type is trivial, cyclic of order 2, or dihedral. We claim that this is equal to $|\calG(G)| + 1$, and explain.

        Bouc's modification of the Roquette theorem \cite[Theorem 9.4.1]{Bou10} gives that every rational irreducible representation of $G$ is of the form $\Ind\Inf^G_{T/S}V$ for a subquotient $T/S$ of $G$ isomorphic to a group of normal $p$-rank 1 and $V$ the unique faithful rational representation of $T/S$. The \emph{type} of $\Ind\Inf^G_{T/S}V$ is $T/S$, and this type is unique. However, given a rational irreducible representation $\Ind\Inf^G_{T/S}V$ of $G$, it follows from \Cref{lem:norm_p_rk_1} that $V' := \bbR \otimes_\bbQ \Ind\Inf^G_{T/S}V$ is a Galois conjugacy sum (possibly with multiplicity) of real type irreducible real representations if and only if $T/S$ is trivial, cyclic of order 2, or dihedral of order at least 16. Here we use \cite[Theorem 9.4.1(b)]{Bou10}, which gives an isomorphism \[\End_{\bbQ G}(V') \cong \End_{\bbQ [T/S]}(V).\] Indeed, if $T/S$ is not one of those, then complex or quaternion type real representations appear, in which case, by \Cref{prop:galois_invariance}, all irreducible summands are complex or quaternion. On the other hand, if $T/S$ is one of these types, then $\bbR \otimes_\bbQ V$ is a Galois conjugacy sum of faithful real irreducibles, which are of real type from \Cref{lem:norm_p_rk_1} and \Cref{prop:galois_invariance}. Hence $V'$ is as well.

        Therefore, every Galois orbit of a irreducible real representation of real type corresponds to an rational irreducible representation of type 1, $C_2$, or dihedral. By dimension count, the images of the $\pi_V$ with $V \in \calG(G)$ and $\pi_\bbR$ are linearly independent, as desired.

    \end{proof}

    \begin{remark} We have a few notes about \Cref{cor:basis_of_burnside}.\label{rmk:2_grp_rmks}
        \begin{enumerate}
            \item In fact, the basis of \Cref{cor:basis_of_burnside} is exactly Bouc's basis of $A(G)^\times$ given in \cite{Bou07}. This follows from the theory of genetic bases for rational $p$-biset functors, $p$-biset functors which have an analogous `Roquette theorem.' See \cite[Part III]{Bou10} for details; the key point here is that Bouc's basis elements also correspond to subquotients of type dihedral, $C_2$, or 1.
            \item The generative AI model used while researching this paper gave an alternative proof of \Cref{cor:basis_of_burnside} relying on Schur indices, the Ritter--Segal theorem \cite{Rit72,Seg72} and Galois-invariance.
            \item For any finite $p$-group $G$, Bouc--Yal\c{c}in give a short exact sequence \cite[Corollary 1.5]{BoYa07} \[0 \to \on{Tor}_{\bbF_2} D^\Omega(G) \to \bbF_2\on{CF}_b(G) \to A(G)^\times \to 0,\] where $D^\Omega(S)$ denotes the subgroup of the Dade group of a $p$-group generated by relative syzygies (see \cite[Section 12.6]{Bou10}). One can see the real, complex, or quaternion data consequently effect the classification of 2-torsion in \cite[Corollary 12.8.11]{Bou10}: dihedral terms only correspond to non-torsion elements, while semidihedral, quaternion, and cyclic of order at least 3 all contribute to torsion.
        \end{enumerate}

    \end{remark}

    \section{A reduction for odd-index normal subgroups}

    Using Clifford theory, we can reduce the question of $G$ strong nirrorno to $N$ strong nirrorno when $[G:N]$ is odd. For most of this section, we will be considering complex characters. In this case, a nirror corresponds to a nontrivial irreducible (complex) character with Frobenius--Schur indicator 1. The first lemma is folklore and attributed to Burnside.

    \begin{lemma}\label{lem:burnside_lemma}
        (Burnside) A group $G$ of odd order has no nontrivial irreducible real-valued complex characters. In particular, every nontrivial irreducible real representation of $G$ is of complex type.
    \end{lemma}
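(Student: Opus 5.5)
The plan is the classical argument through the Frobenius--Schur count of involutions, since $|G|$ odd means $G$ has no involutions. Let $\chi$ be an irreducible complex character of $G$ with $\chi = \overline{\chi}$. I would first show $\chi$ is trivial, and then deduce the statement about real representations.

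\emph{Step 1: a real-valued $\chi$ must be trivial.} For any $g \in G$ we have $\chi(g^{-1}) = \overline{\chi(g)} = \chi(g)$. Because $|G|$ is odd, no $g \neq 1$ is conjugate to its inverse. Indeed, if $hgh^{-1} = g^{-1}$, then $h^2$ centralizes $g$. Since $h$ has odd order, $h$ is a power of $h^2$, so $h$ also centralizes $g$, giving $g = g^{-1}$ and hence $g^2 = 1$, so $g = 1$. The nonidentity conjugacy classes therefore come in pairs $\{C, C^{-1}\}$ with $C \neq C^{-1}$. Now compute
\[\langle \chi, 1\rangle_G = \frac{1}{|G|}\Big(\chi(1) + \sum_{g \neq 1}\chi(g)\Big) = \frac{1}{|G|}\Big(\chi(1) + 2\sum_{\{C,C^{-1}\}} |C|\,\chi(g_C)\Big),\]
where the last sum runs over one representative $g_C$ per pair. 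Each $\chi(g_C)$ is a real algebraic integer. Suppose $\chi \neq 1$, so the left side is $0$. Then $\chi(1) = -2\alpha$ with $\alpha$ a real algebraic integer, so $\chi(1)/2 = -\alpha$ is a rational algebraic integer, hence an integer. Thus $\chi(1)$ is even. This contradicts the fact that $\chi(1)$ divides $|G|$, which is odd. So $\chi = 1$.

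\emph{Step 2: passing to real representations.} Let $V$ be a nontrivial irreducible real representation. If $V$ were of real type, then $\bbC\otimes_\bbR V$ would be irreducible with real-valued character, contradicting Step 1. If $V$ were of quaternionic type, then $\bbC \otimes_\bbR V = W\oplus W$ with $\chi_W$ real-valued, since $\nu(\chi_W) = -1$. By Step 1, $W$ would be trivial, so $V$ would be trivial, a contradiction. Hence $V$ is of complex type.

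The only point needing care is the algebraic-integer parity argument in Step 1. An alternative route is to note that $\nu(\chi) = \frac{1}{|G|}\sum_g \chi(g^2)$ and that squaring is a bijection on $G$ when $|G|$ is odd. This gives $\nu(\chi) = \langle \chi, 1\rangle$, which is $0$ for nontrivial $\chi$. So no nontrivial $\chi$ has indicator $\pm1$, and since indicator $\pm 1$ is equivalent to being real-valued, this gives Step 1 and the type classification at once. I would use this second, shorter argument as the main proof.
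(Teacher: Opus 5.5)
Your proposal is correct. The paper gives no argument of its own; it only cites an exercise in Isaacs. The classical argument behind that exercise is essentially your Step 1: pair each $g\neq 1$ with $g^{-1}$ in $0=\sum_{g}\chi(g)$, deduce that $\chi(1)/2$ is a rational algebraic integer, and contradict $\chi(1)\mid |G|$. Your detour proving $C\neq C^{-1}$ for nonidentity classes is valid but not needed, since pairing elements $g\leftrightarrow g^{-1}$ only uses that $G$ has no involutions.

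Your preferred argument is a genuinely different route. Since $g\mapsto g^{(|G|+1)/2}$ inverts squaring, you get $\nu(\chi)=\langle\chi,1_G\rangle=0$ for every nontrivial $\chi\in\Irr(G)$. So the input is the Frobenius--Schur theorem ($\nu(\chi)\neq 0$ if and only if $\chi$ is real-valued, which the paper recalls in its preliminaries) rather than the divisibility $\chi(1)\mid|G|$. It is shorter, and it gives the second sentence of the lemma immediately. For a nontrivial irreducible real $V$ one has $(\bbC\otimes_\bbR V)^G=0$, so every irreducible constituent is nontrivial with indicator $0$, which is exactly complex type. The integrality route instead needs your separate case analysis in Step 2.
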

    \begin{proof}
        Follows from \cite[Problem 3.16]{Isa76}.
    \end{proof}

    We also list a few convenient lemmas for real characters. For $\sigma\in \Gal(\bbQ^{\on{ab}}/\bbQ)$, we denote by $\Irr_\sigma(G)$ the irreducible characters of $G$ fixed by $\sigma$. For $N \trianglelefteq G$ and $\theta \in  \Irr(N)$, \[\Irr(G\mid \theta) = \{\chi \in \Irr(G)\mid \langle \Res^G_N \chi, \theta \rangle\neq 0\}, \] and say $\chi$ \emph{lies over} $\theta$. Given a subset $I \subseteq \Irr(N)$, set \[\Irr(G\mid I) = \bigcup_{\theta \in I}\Irr(G\mid \theta).\]

    \begin{lemma}\cite[Lemma 3.2]{MN23}\label{prop:malle_navarro}
        Let $N \trianglelefteq G$ be finite groups. Suppose $G/N$ has order not divisible by a prime $p$, and let $\sigma \in \Gal(\bbQ_{\zeta_{|G|}}/\bbQ)$ have order a power of $p$.
        \begin{enumerate}
            \item If $\chi \in \Irr_\sigma(G)$, then all irreducible constituents of $\Res^G_N\chi$ are $\sigma$-invariant;
            \item Suppose $p = 2$ and $\sigma$ complex-conjugates odd-order roots of unity. Suppose $\theta \in \Irr_\sigma(N)$. Then there exists a unique $\chi\in \Irr_\sigma(G)$ over $\theta$. Furthermore, if $\theta$ is $G$-invariant, then $\Res^G_N\chi = \theta$.
        \end{enumerate}
    \end{lemma}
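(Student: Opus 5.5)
We prove (a) by letting $\sigma$ act on the $G$-orbit of constituents given by Clifford theory. For (b) we use the Clifford correspondence to reduce to the $G$-invariant case, then a Gallagher-type parametrisation together with \Cref{lem:burnside_lemma}.

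\emph{Part (a).} Let $\chi \in \Irr_\sigma(G)$ and $\theta$ an irreducible constituent of $\Res^G_N\chi$. By Clifford's theorem,
\[\Res^G_N\chi = e\sum_{\theta' \in \Omega}\theta',\]
where $\Omega$ is the $G$-orbit of $\theta$ and $|\Omega| = [G:T]$ divides $[G:N]$, so $p \nmid |\Omega|$. Since $\chi^\sigma = \chi$ and restriction commutes with $\sigma$, the cyclic $p$-group $\langle\sigma\rangle$ permutes $\Omega$. This action commutes with conjugation by $G$: $(\theta^g)^\sigma = (\theta^\sigma)^g$. Every $\langle \sigma\rangle$-orbit has $p$-power size and $p\nmid |\Omega|$, so $\sigma$ fixes some $\theta_0\in\Omega$. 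Commutation then forces $\sigma$ to fix every $\theta_0^g$, hence all of $\Omega$.

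\emph{Part (b), reduction.} Let $T = G_\theta$. Since $\theta^\sigma = \theta$, we have $(\theta^g)^\sigma = \theta^g$ for all $g$, so $T$ is the same group and $\sigma$ preserves $\Irr(T\mid\theta)$. Induction commutes with $\sigma$, and the Clifford correspondence $\psi \mapsto \Ind_T^G\psi$ is a bijection $\Irr(T\mid \theta)\to\Irr(G\mid\theta)$. Hence it restricts to a bijection on $\sigma$-fixed points, and we may replace $G$ by $T$, assuming $\theta$ is $G$-invariant. Here $|G/N|$ is odd since $p=2$.

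\emph{Part (b), extension.} I first show $\theta$ extends to $G$. The obstruction is the cocycle class $[\alpha]\in H^2(G/N,\bbC^\times)$ of the character triple $(G,N,\theta)$. This class has order dividing $|G/N|$, so it is represented by a cocycle with values in odd-order roots of unity. On such values $\sigma$ acts as complex conjugation, i.e. inversion. Because $\theta^\sigma = \theta$, the triple is $\sigma$-stable, so $[\alpha] = [\alpha]^\sigma = [\alpha]^{-1}$. Thus $[\alpha]^2 = 1$, and since its order is odd, $[\alpha]=1$. The main obstacle is making rigorous that $\sigma$ acts on the obstruction class exactly by $\sigma$ on the cocycle values. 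I would do this by choosing a projective representation extending a representation of $\theta$, normalised to have determinant-one-type odd-order scalars, and applying $\sigma$ entrywise.

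\emph{Part (b), uniqueness.} By Gallagher's theorem, fixing one extension $\psi$ of $\theta$ gives
\[\Irr(G\mid\theta) = \{\psi\beta : \beta\in\Irr(G/N)\},\]
with all $\psi\beta$ distinct. The extensions of $\theta$ are the $\psi\lambda$ with $\lambda$ a linear character of $G/N$, and $\sigma$ permutes them. This is an action of the $2$-group $\langle\sigma\rangle$ on a set of odd size $|G/N:(G/N)'|$, so some extension is $\sigma$-fixed; choose $\psi$ to be it. Then $(\psi\beta)^\sigma = \psi\beta^\sigma$, so $\psi\beta$ is $\sigma$-fixed iff $\beta^\sigma = \beta$. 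As $G/N$ has odd order, $\sigma$ acts on $\Irr(G/N)$ by complex conjugation. By \Cref{lem:burnside_lemma} the only real $\beta$ is $1$. Hence $\psi$ is the unique $\sigma$-fixed character over $\theta$, and $\Res^G_N\psi=\theta$. This gives both the uniqueness and the final assertion of (b).
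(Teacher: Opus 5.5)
Your proof is correct. The paper gives no proof of its own to compare against: it states this lemma without proof, citing \cite[Lemma 3.2]{MN23}, and only applies part (b) with $\sigma$ equal to complex conjugation (in \Cref{prop:cliff_lemmas}). What you give is a self-contained reconstruction from standard tools: Clifford's theorem and the Clifford correspondence, the obstruction class of the character triple $(G,N,\theta)$, Gallagher's theorem, and \Cref{lem:burnside_lemma}. Its merit over the citation is that it shows where each hypothesis is used:
\begin{itemize}
\item $p \nmid |G/N|$ is used only in the fixed-point count on the Clifford orbit in (a);
\item the $2$-power order of $\sigma$ is used only to find a $\sigma$-fixed extension among the odd number $|G/N:(G/N)'|$ of extensions;
\item the action of $\sigma$ as complex conjugation on odd-order roots of unity is used twice, to kill the obstruction class and to get $\beta^\sigma=\overline{\beta}$ for $\beta\in\Irr(G/N)$.
\end{itemize}

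The step you flag does go through, but not via the normalisation you suggest. Forcing $\det\mathcal{P}(g)=1$ only makes the factor set take values in $\theta(1)$-th roots of unity, which may have even order. Work with cohomology classes instead.
\begin{itemize}
\item Extend $\sigma$ to $\tilde\sigma\in\Aut(\bbC)$. Applying $\tilde\sigma$ entrywise to a projective representation $\mathcal{P}$ associated with $(G,N,\theta)$ gives one associated with $(G,N,\theta^\sigma)=(G,N,\theta)$, with factor set $\tilde\sigma\circ\alpha$.
\item The class depends only on the triple, so $[\tilde\sigma\circ\alpha]=[\alpha]$.
\item Since $\tilde\sigma$ sends coboundaries to coboundaries, it acts on $H^2(G/N,\bbC^\times)$.
\item Let $n=o([\alpha])$, which is odd. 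Write $\alpha^n=\delta\mu$, where $\delta\mu(x,y)=\mu(x)\mu(y)\mu(xy)^{-1}$, and pick $\nu$ with $\nu^n=\mu$. Then $\alpha'=\alpha\cdot(\delta\nu)^{-1}$ is cohomologous to $\alpha$ and takes values in the $n$-th roots of unity, on which $\tilde\sigma$ acts by inversion.
\end{itemize}
Hence $[\alpha]=[\tilde\sigma\circ\alpha']=[\alpha']^{-1}=[\alpha]^{-1}$, which is what you wanted.
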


    \begin{lemma}\cite[Lemma 4.1]{DNT08}\label{lem:more_nt_cliff}
        Let $N \trianglelefteq G$ with $[G:N]$ odd. If $\chi\in \Irr(G)$ is real-valued, then all irreducible constituents of $\Res^G_N \chi$ are real-valued. If $\theta \in \Irr(N)$ is real-valued, then there exists a unique real-valued $\psi \in \Irr(G|\theta)$.
    \end{lemma}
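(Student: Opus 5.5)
The plan is to obtain both assertions as the special case of \Cref{prop:malle_navarro} in which $p = 2$ and $\sigma$ is complex conjugation. Take $\sigma \in \Gal(\bbQ_{\zeta_{|G|}}/\bbQ)$ to be complex conjugation. It has order $2$, a power of $p = 2$, and $2 \nmid [G:N]$ by hypothesis. Moreover, $\sigma$ inverts every root of unity, in particular every odd-order root of unity, so it satisfies the extra hypothesis in part (b). For a character $\psi$ we have ${}^\sigma\psi = \overline{\psi}$, so $\Irr_\sigma(G)$ is exactly the set of real-valued irreducible characters of $G$, and likewise for $\Irr_\sigma(N)$.

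With this dictionary, part (a) of \Cref{prop:malle_navarro} says that if $\chi$ is real-valued then every irreducible constituent of $\Res^G_N\chi$ is real-valued, which is the first claim. Part (b) says that for real-valued $\theta\in\Irr(N)$ there is a unique $\chi\in\Irr_\sigma(G)$ lying over $\theta$, i.e.\ a unique real-valued $\psi\in\Irr(G\mid\theta)$, which is the second claim.

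To make the first claim independent of the cited lemma, I would also include a direct Clifford-theoretic argument. By Clifford's theorem the irreducible constituents of $\Res^G_N\chi$ form a single $G$-orbit $\Omega=\{\theta^g\}$. The size of $\Omega$ is $[G:I_G(\theta)]$, which divides $[G:N]$ and is therefore odd. Since $\chi=\overline{\chi}$, complex conjugation permutes $\Omega$. It is an involution on a set of odd size, so it fixes some $\theta_0\in\Omega$, meaning $\theta_0$ is real-valued. Every other constituent has the form $\theta_0^g$ with values $\theta_0(gxg^{-1})$, so all constituents are real-valued.

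The uniqueness statement in the second claim is the real content, essentially a Glauberman-type correspondence argument for the odd-order group $G/N$ using \Cref{lem:burnside_lemma}. It is the step I expect to be the main obstacle if proved from scratch. Since \Cref{prop:malle_navarro}(b) is available, I would simply invoke it rather than reprove it.
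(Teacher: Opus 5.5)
Your argument is correct, but it takes a different route from the paper. The paper gives no argument for this lemma at all; it imports the statement verbatim from \cite[Lemma 4.1]{DNT08}. You instead obtain it as the instance $p=2$, $\sigma=$ complex conjugation of \Cref{prop:malle_navarro}.

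Your hypothesis check is right:
\begin{itemize}
\item $\sigma$ has order dividing $2$, and $2\nmid[G:N]$;
\item $\sigma$ inverts every root of unity, in particular the odd-order ones;
\item all character values of $G$ and $N$ lie in $\bbQ(\zeta_{|G|})$, so $\Irr_\sigma(G)$ and $\Irr_\sigma(N)$ are exactly the real-valued irreducibles.
\end{itemize}
This shows that, inside the paper, \Cref{lem:more_nt_cliff} is logically redundant given the Malle--Navarro lemma. Your orbit-parity argument for the first assertion is a correct and fully elementary replacement for either citation.

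The uniqueness half, however, is only relocated, not made more self-contained. \Cref{prop:malle_navarro}(b) is a Galois-equivariant generalization of exactly this statement. The mechanism behind it is also not really Glauberman-type. A direct proof runs as follows:
\begin{itemize}
\item Reduce to $\theta$ being $G$-invariant via the Clifford correspondence, which commutes with complex conjugation because $\overline{\theta}=\theta$.
\item The class of $(G,N,\theta)$ in $H^2(G/N,\bbC^\times)$ then equals its own inverse. Since $|G/N|$ is odd, it is trivial, so $\theta$ extends to some $\hat\theta$.
\item Write $\overline{\hat\theta}=\hat\theta\lambda$ with $\lambda$ linear, and let $\mu$ be the unique linear character with $\mu^2=\lambda$; it exists because squaring is bijective on the odd-order group of linear characters of $G/N$.
\item By Gallagher's theorem every member of $\Irr(G\mid\theta)$ is $\hat\theta\mu\gamma$ for a unique $\gamma\in\Irr(G/N)$, and $\overline{\hat\theta\mu\gamma}=\hat\theta\mu\overline{\gamma}$. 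So $\hat\theta\mu\gamma$ is real if and only if $\overline{\gamma}=\gamma$, which by \Cref{lem:burnside_lemma} holds exactly when $\gamma=1$.
\end{itemize}
This gives both existence and uniqueness without any citation beyond standard Clifford and Gallagher theory.
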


    \begin{proposition}\label{prop:cliff_lemmas}
        %Let $N\trianglelefteq G$ with $[G:N]$ odd and let $\calO \subseteq \Irr(N)$ be a $G$-orbit of a character which contains a real-valued character. Then every member of $\calO$ is real-valued,

        With the setup of \Cref{lem:more_nt_cliff}, if in addition, $\chi$ is nontrivial of real type, then all irreducible constituents of $\Res^G_N \chi$ are also nontrivial of real type. Moreover, \[\Res^G_N \chi = \sum_{\theta \in \calO} \theta,\] with $\calO\subseteq \Irr(N)$ a unique $G$-conjugacy class.

    \end{proposition}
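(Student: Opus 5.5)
The plan is to combine Clifford theory with \Cref{lem:more_nt_cliff}, \Cref{prop:malle_navarro} (taking $p=2$ and $\sigma$ to be complex conjugation) and \Cref{lem:burnside_lemma}. Let $\chi \in \Irr(G)$ be nontrivial with $\nu(\chi) = 1$, and let $N \trianglelefteq G$ with $[G:N]$ odd. By Clifford's theorem, $\Res^G_N \chi = e\sum_{\theta\in\calO}\theta$ for a single $G$-orbit $\calO \subseteq \Irr(N)$ and some integer $e \geq 1$. By \Cref{lem:more_nt_cliff}, every $\theta \in \calO$ is real-valued. It remains to show three things: $e = 1$, each $\theta$ has indicator $1$ rather than $-1$, and no $\theta$ is trivial.

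\emph{Multiplicity one.} Fix $\theta \in \calO$ and let $T$ be its stabilizer in $G$. By the Clifford correspondence, $\chi = \Ind_T^G\psi$ for a unique $\psi \in \Irr(T\mid\theta)$, and $\Res^T_N\psi = e\theta$. Since $\overline{\theta} = \theta$, the character $\overline{\psi}$ also lies over $\theta$. It induces to $\overline{\chi} = \chi$, so uniqueness of the Clifford correspondent forces $\overline{\psi} = \psi$. Now $[T:N]$ is odd, $\theta$ is $T$-invariant, and $\theta$ is fixed by complex conjugation $\sigma$, which has order $2$ and conjugates odd-order roots of unity. So \Cref{prop:malle_navarro}(b) gives a unique $\sigma$-fixed character of $T$ over $\theta$, and this character restricts to exactly $\theta$. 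That character must be $\psi$, hence $e = 1$. I expect this step to be the main technical point, since it is the only place where the real-valuedness of $\chi$ must be transported to the stabilizer.

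\emph{Real type.} Since $\nu(\chi) = 1$, $\chi$ is afforded by a real representation $V$ with $\bbC\otimes_\bbR V$ affording $\chi$. Decompose $\Res^G_N V$ into real irreducibles. Because $\theta$ is real-valued, it is not of complex type, so the real irreducible $U$ containing $\theta$ after complexification has $\bbC\otimes_\bbR U$ equal to $\theta$ (real type) or $2\theta$ (quaternion type). By the multiplicity-one step, $\theta$ occurs exactly once in $\Res^G_N\chi$. Hence $\bbC\otimes_\bbR U = \theta$, so $\nu(\theta) = 1$.

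\emph{Nontriviality.} Suppose some $\theta \in \calO$ were trivial. Then $\calO = \{1_N\}$ and $\Res^G_N\chi = 1_N$ (using $e = 1$). So $\chi$ is linear with $N \leq \ker\chi$, i.e.\ $\chi$ is inflated from $G/N$. The group $G/N$ has odd order, and $\chi$ is a nontrivial real-valued irreducible character of it. This contradicts \Cref{lem:burnside_lemma}. Therefore every constituent of $\Res^G_N\chi$ is nontrivial of real type, and $\Res^G_N\chi = \sum_{\theta\in\calO}\theta$ for the unique $G$-orbit $\calO$.
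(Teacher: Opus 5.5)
Your proof is correct and follows the paper's route. Both pass to the inertia group, apply \Cref{prop:malle_navarro}(b) there to get multiplicity one, rule out $\nu(\theta)=-1$ by the even-multiplicity argument, and use \Cref{lem:burnside_lemma} for nontriviality. The differences are cosmetic: you prove the Clifford correspondent $\psi$ is real from injectivity of $\psi\mapsto\Ind_T^G\psi$ and then use uniqueness in \Cref{prop:malle_navarro}(b), whereas the paper starts from the Malle--Navarro character and identifies its induction with $\chi$ via the uniqueness in \Cref{lem:more_nt_cliff}; and your nontriviality step correctly applies Burnside to $\chi$, where the paper's text says $\theta$.
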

    \begin{proof}
        First, let $\theta$ be an irreducible constituent of $\Res^G_N \chi$.
        \Cref{lem:more_nt_cliff} shows $\chi\in \Irr(G\mid \theta)$ is unique. Let $I$ denote the inertia group of $\theta$, i.e., $I = \{g\in G \mid {}^g\theta = \theta\}$. Then \Cref{prop:malle_navarro}(b) obtains the unique real-valued $\psi \in \Irr(I\mid \theta)$ which satisfies $\Res^I_N \psi = \theta$.
        \cite[Theorem 6.11]{Isa76} now implies $\chi = \Ind^G_I \psi$ and \[\langle \Res^G_N \chi, \theta\rangle = \langle \Res^I_N \psi, \theta\rangle = 1.\] Thus by Clifford theory, \[\Res^G_N \chi = \sum_{\theta' \in \calO}\theta', \] where $\calO$ is the $G$-orbit of $\theta$.

        We have $\nu(\theta) \in \{\pm1\}$ by the conclusion of \Cref{lem:more_nt_cliff}. If $\nu(\theta) = -1$, then $\theta$ occurs with even multiplicity in the character of every real representation of $N$, but $\langle \Res^G_N \chi, \theta\rangle = 1$ is odd, a contradiction. Hence $\nu(\theta) = 1$. It is easy to see all $G$-conjugates are also real type. Now if $\calO$ contains the trivial character $1_N$, then $\calO = \{1_N\}$. But then $\theta$ is then a nontrivial real-valued irreducible of $G/N$ which is an odd-order group, contradicting \Cref{lem:burnside_lemma}.

    \end{proof}

    \begin{theorem}\label{label:normal_2_sylow}
        Let $N \trianglelefteq G$ with $[G:N]$ odd. If $N$ is strongly nirrorno, then so is $G$. In particular, every finite group with normal Sylow 2-subgroup is strongly nirrorno.
    \end{theorem}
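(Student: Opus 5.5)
The plan is to take a linear dependence among the parity functions of $G$, restricted to $\on{Solv}(G)$, and restrict it further to the solvable subgroups of $N$. There, by \Cref{prop:cliff_lemmas}, it becomes a dependence among parity functions of $N$, which strong nirrorno-ness of $N$ forbids. Since $[G:N]$ is odd, $G$ has even order exactly when $N$ does, so the hypothesis on $N$ already makes $G$ of even order.

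\textbf{Step 1: restrict to $N$.} Suppose
\[a_0\pi_\bbR+\sum_{V\in\calG(G)}a_V\pi_V=0 \quad\text{on } \on{Solv}(G), \qquad a_0,a_V\in\bbF_2.\]
Every solvable subgroup $H\le N$ lies in $\on{Solv}(G)$. For such $H$ we have $\dim V^H=\dim(\Res^G_N V)^H$, so $\pi_V|_{\on{Solv}(N)}=\pi_{\Res^G_N V}$. By \Cref{prop:cliff_lemmas}, if $\chi$ is the character of $\bbC\otimes V$, then $\Res^G_N\chi=\sum_{\theta\in\calO_\chi}\theta$ with multiplicity one. Here $\calO_\chi$ is a single $G$-orbit of nirrors of $N$, and it does not contain $1_N$. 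Hence $\pi_{\Res V}=\sum_{\theta\in\calO_\chi}\pi_\theta$.

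\textbf{Step 2: Galois bookkeeping, the main obstacle.} I must write this sum in terms of the basis-candidates $\calG(N)$ and check that no cancellation occurs, in particular that no Galois class appears an even number of times.
\begin{itemize}
\item[(i)] By the uniqueness in \Cref{lem:more_nt_cliff}, each real $\theta$ lies under a unique real $\chi$. This gives a map $\theta\mapsto\chi$ from nirrors of $N$ to nirrors of $G$, and it commutes with the Galois action and the $G$-conjugation action. Therefore distinct Galois classes of nirrors of $G$ lie over disjoint sets of Galois classes of nirrors of $N$.
\item[(ii)] Fix $\chi$ and $O=\calO_\chi$. Since Galois conjugation commutes with $G$-conjugation, $g\in G$ maps the Galois class of $\theta$ intersected with $O$ bijectively onto the Galois class of ${}^g\theta$ intersected with $O$. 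So $O$ is partitioned into $m$ Galois classes, each meeting $O$ in the same number $k$ of characters, and $km=|O|=[G:I]$ is odd. Thus $k$ is odd.
\item[(iii)] Using Galois invariance of parity functions (\Cref{prop:galois_invariance}), $\pi_{\Res V}=k\sum_{j=1}^m\pi_{\theta_j}=\sum_{j=1}^m\pi_{\theta_j}$, where the $\theta_j$ represent $m$ distinct classes in $\calG(N)$.
\end{itemize}

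\textbf{Step 3: conclude.} The restricted relation reads
\[a_0\pi_\bbR+\sum_{V}a_V\sum_{\theta_j\text{ under }V}\pi_{\theta_j}=0 \quad\text{on }\on{Solv}(N).\]
By (i), the families $\{\theta_j\}$ for different $V$ are disjoint subsets of $\calG(N)$, and each is nonempty. Strong nirrorno-ness of $N$ then forces every coefficient to vanish: each $a_V=0$, and $a_0=0$. So $G$ is strongly nirrorno.

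\textbf{The corollary.} If $P\trianglelefteq G$ is a Sylow $2$-subgroup with $P\neq1$, then $P$ is strongly nirrorno by \Cref{cor:basis_of_burnside}, and $[G:P]$ is odd, so the first part applies. If $P=1$, then $G$ has odd order and falls outside the definition of strong nirrorno-ness, which only concerns even order groups. In that case $G$ is nirrorno vacuously by \Cref{lem:burnside_lemma}, since it has no nirrors.
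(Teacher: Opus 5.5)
Your proof is correct and follows the same route as the paper. You restrict the relation to $\on{Solv}(N)$ via \Cref{prop:cliff_lemmas}, use uniqueness of the real-valued character over $\theta$ to show that distinct classes in $\calG(G)$ lie over disjoint Galois classes in $\calG(N)$, and use that $|\calO_V|$ is odd. The only difference is a mild refinement: the paper needs only that some Galois class meets $\calO_V$ an odd number of times, while you show, via the transitive $G$-action commuting with Galois conjugation, that every Galois class meets $\calO_V$ in the same odd number $k$, which gives the cleaner identity $\pi_{\Res V}=\sum_j \pi_{\theta_j}$.
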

    \begin{proof}
        Suppose \[a_1\pi_\bbR+ \sum_{V \in \calG(G)}a_V \pi_V = 0 \in \CF(\on{Solv}(G), \bbF_2)\] with each $a_V \in \bbF_2$. It suffices to show all all $a_V$ and $a_1$ are zero. By \Cref{prop:cliff_lemmas}, we have \[\pi_V = \sum_{\theta \in \calO_V}\pi_{W_\theta},\] where $W_\theta$ denotes the real $N$-representation associated to $\theta$, and $\calO_V$ denotes the unique conjugacy class of characters under $\chi_V$. Therefore, after restricting to the solvable subgroups of $N$, we have \[a_1 \pi_\bbR + \sum_{V \in \calG(G)}\sum_{\theta \in \calO_V} a_{V}\pi_{W_\theta} = 0 \in \CF(\on{Solv}(N),\bbF_2),\] where $\calO_V$ denotes the collection of characters living under $\chi_V$ as in \Cref{prop:cliff_lemmas}.

        From \Cref{prop:cliff_lemmas}, we know that every irreducible $W_\theta$ is counted at most once in the sum, however Galois classes may appear multiple times.
        If $W_\theta$, $W_{\theta'}$ belong to the same Galois orbit, then $\theta = {}^\sigma\theta'$ for some Galois automorphism $\sigma$, and there exists a unique $\chi$ above $\theta$ and ${}^\sigma\theta'$. So $\chi{}^\sigma$ lives above $\theta'$. But since we iterate over Galois classes in $\calG(G)$, this forces $\chi^\sigma= \chi$. In particular, any two Galois-conjugate $W_\theta$, $W_{\theta'}$ in the sum expansion live under the same irreducible $\chi \in \calG(G)$.

        For $\theta\in \calO_V$, $\chi_V$ is the unique irreducible above $\theta$ by \Cref{lem:more_nt_cliff}. We write $ u(\theta) := V$, and we let $c(\theta)$ count the number of Galois conjugates of $\theta$ in $\calO_{u(\theta)}$ mod 2. Then by disjointness, we have
        \[a_1\pi_\bbR + \sum_{\theta \in \calG(N)} c(\theta)a_{u(\theta)}\pi_{W_\theta} = 0 \in \CF(\on{Solv}(N),\bbF_2).\] But we have assumed $N$ is strongly nirrorno, therefore $c(\theta)a_{u(\theta)}$ is zero for all $\theta$ appearing under some $\chi_V$ with $V \in \calG(G)$. Additionally, $a_1$ is zero.

        We claim $a_V$ must be zero for any $V \in \calG(G)$. Since $[G:N]$ is odd, $|\calO_V|$ is odd, and therefore, there exists a $\theta \in \calO_V$ for which $c(\theta) \equiv 1 \mod 2$. But we have $c(\theta)a_{u(\theta)} = c(\theta)a_V$ is 0, hence $a_V\equiv 0 \mod 2$ and we are done.

    \end{proof}

    This reduction proof is the reason the strong nirrorno criterion is useful: it is unclear to us whether an analogous reduction statement holds for nirrorno-ness.

    \section{Not all groups are nirrorno}

    In this section, we show there exists an orientable nirror for the 2-nilpotent (i.e., finite group $G$ with a normal subgroup $N$ with $[G:N]$ a power of 2) group $G := C_7 \rtimes SD_{16}$. The following lemma is motivated from \cite{KM90}.

    \begin{lemma}\label{lem:ind_lem}
        Let $d > 1$ be odd, set $C := C_d$, and let $\theta \in \Irr(C)$ be a faithful (complex) character. Let $P$ be a finite group with a homomorphism $\alpha\colon P \to \Aut(C)$, let $P_0 = \ker(\alpha)$, and let $H := C \rtimes P$ be defined by the action of $\alpha$. Let $\varphi \in \Irr(P_0)$, and define \[\chi := \Ind^H_{C\times P_0}(\theta \boxtimes \varphi).\]
        The following hold:
        \begin{enumerate}
            \item $\chi$ is irreducible of degree $|\im(\alpha)| \cdot \varphi(1)$;
            \item $\nu(\chi) = 0$ unless the involution $-1 \in \im(\alpha)$, in which case, for any $\tau \in \alpha\inv(-1)$, \[\nu(\chi) = \nu_\tau(\varphi) := \frac{1}{|P_0|}\sum_{p \in \tau P_0} \varphi(p^2).\]
        \end{enumerate}
    \end{lemma}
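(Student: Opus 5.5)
The plan is to handle (a) by Clifford theory relative to the normal subgroup $M := C \times P_0$, and (b) by the standard formula for the Frobenius--Schur indicator of an induced character, splitting the sum according to the cosets of $M$.

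\textbf{Step 1: the subgroup $M$.} Since $P_0 = \ker(\alpha)$ centralizes $C$, $M = C P_0 \cong C \times P_0$ is a subgroup of $H$. It is normal, because $P_0 \trianglelefteq P$ and $C \trianglelefteq H$. Moreover $H/M \cong P/P_0 \cong \im(\alpha)$.

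\textbf{Step 2: the inertia group of $\psi := \theta \boxtimes \varphi$.} Let $h = cp$ with $c \in C$ and $p \in P$. Then ${}^h\psi$ restricts on $C$ to $\theta \circ \alpha(p)^{\pm1}$. A faithful linear character of a cyclic group determines the automorphism: $\theta \circ \beta = \theta$ forces $\beta = \mathrm{id}$. Hence ${}^h\psi = \psi$ implies $\alpha(p) = 1$, i.e.\ $h \in M$, so the inertia group of $\psi$ is exactly $M$. By the Clifford correspondence (\cite[Theorem 6.11]{Isa76}), $\chi = \Ind^H_M \psi$ is irreducible of degree $[H:M]\varphi(1) = |\im(\alpha)|\varphi(1)$. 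This proves (a).

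\textbf{Step 3: the indicator formula.} Write $\psi^\circ$ for $\psi$ extended by zero outside $M$, so that $\chi(x) = \frac{1}{|M|}\sum_{y \in H}\psi^\circ(yxy\inv)$. Summing over $x = h^2$ and reindexing $h \mapsto y h y\inv$ gives
\[\nu(\chi) = \frac{1}{|M|}\sum_{h \in H,\ h^2 \in M} \psi(h^2).\]
Every $h$ can be written as $h = ct$ with $c \in C$ and $t \in P$, and $h^2 = c\,\alpha(t)(c)\,t^2$. So $h^2 \in M$ exactly when $\alpha(t)^2 = 1$, and in that case $\psi(h^2) = \theta\big(c\,\alpha(t)(c)\big)\varphi(t^2)$. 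I will group the terms according to the value $\varepsilon := \alpha(t) \in \im(\alpha)$, which ranges over the elements with $\varepsilon^2 = 1$.

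\textbf{Step 4: evaluating each group of terms.} For fixed $\varepsilon$, the inner sum over $c \in C$ is $\sum_{c} \theta(c^{1+\varepsilon})$, writing $C$ additively in the exponent.
\begin{itemize}
\item If $\varepsilon \neq -1$, the map $c \mapsto c^{1+\varepsilon}$ is an endomorphism of $C$ with nontrivial image. Then $\sum_c \theta(c^{1+\varepsilon})$ equals the kernel size times the sum of $\theta$ over a nontrivial subgroup. Because $\theta$ is faithful, that sum is $0$.
\item The case $\varepsilon = 1$ is included above, since $1 + \varepsilon = 2$ and $d$ is odd. Equivalently, $\theta$ is not real-valued, so $\nu(\theta) = 0$.
\item If $\varepsilon = -1$, then $h^2 = t^2$, and the inner sum is $|C|\varphi(t^2)$. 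Summing over $t \in \tau P_0$ gives $\frac{|C|}{|M|}\sum_{t \in \tau P_0}\varphi(t^2) = \nu_\tau(\varphi)$.
\end{itemize}
Consequently, if $-1 \notin \im(\alpha)$, every group of terms vanishes and $\nu(\chi) = 0$; otherwise $\nu(\chi) = \nu_\tau(\varphi)$. This value is independent of the choice of $\tau \in \alpha\inv(-1)$ because $\alpha\inv(-1)$ is the single coset $\tau P_0$.

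I expect the main obstacle to be the $\varepsilon \neq \pm 1$ case. When $d$ is composite, $\Aut(C_d)$ can contain involutions other than $-1$, and one must check carefully that $1 + \varepsilon$ is then a non-automorphism endomorphism with nontrivial image, so that the character-sum vanishing argument applies.
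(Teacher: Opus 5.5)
Your proof is correct and follows the paper's route. For (a) you use Clifford theory with inertia group $C\times P_0$. For (b) the paper simply cites Kawanaka--Matsuyama's formula (4.6), while you derive the coset decomposition of the indicator of an induced character directly and factor each coset sum into a $C$-part (vanishing unless $\varepsilon=-1$) times a twisted indicator of $\varphi$.

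There is one cosmetic slip: ${}^h\psi|_C$ is $\varphi(1)\,\theta\circ\alpha(p)^{\pm1}$ rather than $\theta\circ\alpha(p)^{\pm1}$, which does not affect the argument. Also, the ``obstacle'' you anticipate is a non-issue, since $c\mapsto c\,\varepsilon(c)$ has trivial image exactly when $\varepsilon$ is inversion.
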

    \begin{proof}
        For (a), $C \times P_0$ is normal in $H$, as it is precisely the kernel of the action on $C$. It suffices to show the inertia group of $\theta \boxtimes \varphi$ by Clifford theory, and this follows since $\theta$ is faithful and by the definition of $P_0 = \ker(\alpha)$. Since $\theta$ is degree 1, as $C_d$ is abelian, and $[P:P_0] = |\im(\alpha)|$, the degree follows. (b) is a straightforward application of \cite[(4.6)]{KM90}.

    \end{proof}

    We define $G := C_7 \rtimes SD_{16}$. We write $SD_{16} := \langle a,b \mid a^8 = b^2 = 1, bab\inv = a^3\rangle$ and write $Q \leq SD_{16}$ for its unique normal subgroup isomorphic to $Q_8$. We have $Q = \langle a^2, ab\rangle$. Then $SD_{16}/Q_8$ acts on $C_7 = \langle c \mid c^7 = 1\rangle$ by inversion. Precisely, the action is given by ${}^ac = c\inv$ and ${}^bc = c\inv$. Set $I := C_7 \times Q \trianglelefteq G$ of index 2.

    Fix $\theta \in \Irr(C_7)$ faithful, and let $\varphi$ be the unique 2-dimensional irreducible character of $Q$; we have $\nu(\varphi) = -1$. Set \[\chi := \Ind^G_I(\theta \boxtimes \varphi),\] which satisfies $\chi(1) = 4$ by \Cref{lem:ind_lem}.

    \begin{theorem}\label{thm:counterex}
        The character $\chi$ is irreducible of real type. Let $V$ be the associated irreducible $\bbR G$-module of real type. Then $\dim V^K$ is even for every subgroup $K \leq G$, and consequently, $V$ is orientable.
    \end{theorem}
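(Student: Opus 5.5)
The plan is to get irreducibility and real type from \Cref{lem:ind_lem}, and then to compute $\dim V^K$ via Mackey's formula, which reduces everything to fixed points of $\theta\boxtimes\varphi$ on subgroups of $I$.

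\textbf{Irreducibility and real type.} Apply \Cref{lem:ind_lem} with $d=7$, $P=SD_{16}$ and $\alpha\colon SD_{16}\to \Aut(C_7)$ the inversion action. Then $P_0=\ker\alpha=Q$ and $\im(\alpha)=\{\pm1\}$. Part (a) gives that $\chi$ is irreducible of degree $4$. For part (b), take $\tau=b$, so that $\nu(\chi)=\nu_b(\varphi)=\frac18\sum_{p\in bQ}\varphi(p^2)$. Since $Q=\{1,a^2,a^4,a^6,ab,a^3b,a^5b,a^7b\}$, the coset $bQ$ consists of $a,a^3,a^5,a^7$ together with the elements $a^{2k}b$.
\begin{itemize}
\item The odd powers of $a$ square to $a^2$ or $a^6$. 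These have order $4$ in $Q$, so $\varphi$ vanishes on them.
\item Each $a^{2k}b$ is an involution, since $(a^{2k}b)^2=a^{2k}a^{6k}=1$. Each therefore contributes $\varphi(1)=2$.
\end{itemize}
Hence $\nu(\chi)=\frac{4\cdot 2}{8}=1$, and $\chi$ is the character of an irreducible real representation $V$ of real type, with $\bbC\otimes_\bbR V$ affording $\chi$.

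\textbf{Parity of $\dim V^K$.} By \Cref{prop:galois_invariance}(a), $\dim V^K=\langle \Res^G_K\chi,1_K\rangle_K$. Since $I$ is normal of index $2$ in $G$, Mackey's formula leaves two cases.

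\emph{Case $K\le I$.} Here $\Res^G_K\chi=\Res^I_K(\theta\boxtimes\varphi)+\Res^I_K({}^b(\theta\boxtimes\varphi))$. Since ${}^b\theta=\overline\theta$, and ${}^b\varphi=\varphi$ by uniqueness of $\varphi$, we have ${}^b(\theta\boxtimes\varphi)=\overline\theta\boxtimes\varphi=\overline{\theta\boxtimes\varphi}$ because $\varphi$ is real-valued. The two summands are complex conjugate, so they have the same multiplicity of $1_K$. The dimension is therefore even.

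\emph{Case $K\not\le I$.} Here $KI=G$, so $\Res^G_K\chi=\Ind^K_{L}\Res^I_L(\theta\boxtimes\varphi)$ with $L=K\cap I$. Frobenius reciprocity gives $\dim V^K=\langle\Res^I_L(\theta\boxtimes\varphi),1_L\rangle_L$.
\begin{itemize}
\item If $L\cap C_7\neq1$, a nontrivial $c\in L\cap C_7$ acts by the scalar $\theta(c)\neq1$. The fixed space is then $0$.
\item If $L\cap C_7=1$, then $L$ embeds in $I/C_7\cong Q$, so $L$ is a $2$-group. Its image in $C_7$ is trivial, so $L\le Q$, and the fixed space is that of $\Res^Q_L\varphi$. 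Since $\varphi$ is faithful and $Q_8$ has the unique involution $a^4$ (acting as $-I$), this fixed space is $2$-dimensional if $L=1$ and $0$ otherwise.
\end{itemize}
Either way the dimension is even.

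Thus $\pi_V\equiv0$ on all subgroups, so condition (b) of \Cref{prop:equiv_conditions} fails and $V$ is orientable. The main obstacle I expect is the bookkeeping in the second Mackey case, namely identifying $K\cap I$ correctly when $K$ meets $C_7$ trivially. The computation of $\nu_b(\varphi)$ and the identification ${}^b\varphi=\varphi$ are routine.
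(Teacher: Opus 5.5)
Your proof is correct. The first part is the paper's own computation: $bQ=aQ$, so $\nu_b(\varphi)=\nu_a(\varphi)$ is literally the same sum. For the parity of $\dim V^K$ you take a different route.

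The paper sorts subgroups by whether they contain $C_7$. Those that do are handled by the single character computation $\dim V^{C_7}=0$. Those that meet $C_7$ trivially are $2$-groups, hence conjugate into $SD_{16}$. The paper then uses Mackey and Frobenius reciprocity to show $\Res^G_{SD_{16}}\chi=\psi+\overline{\psi}$, where $\psi=\Ind^{SD_{16}}_{\langle a\rangle}\lambda$ is the faithful degree-$2$ character of complex type. This needs the explicit check that $\psi$ and $\varphi$ agree on $Q$. Evenness on every $2$-subgroup then follows at once from $\dim V^H=2\dim\psi^H$.

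You instead apply Mackey to the index-$2$ normal subgroup $I=C_7\times Q$. For $K\le I$, the conjugate-pair structure comes directly from ${}^b(\theta\boxtimes\varphi)=\overline{\theta\boxtimes\varphi}$, so you need nothing about the characters of $SD_{16}$. For $K\not\le I$, Frobenius reciprocity reduces to fixed points of the $2$-dimensional $\theta\boxtimes\varphi$ on $K\cap I$. These are $0$ unless $K\cap I=1$, because $C_7$ acts by nontrivial scalars and every nontrivial subgroup of $Q_8$ contains the central involution acting as $-1$.

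What your version buys: it is more self-contained, it never moves subgroups into a Sylow subgroup, and it shows the source of evenness directly — $\theta$ is of complex type and $\varphi$ is fixed-point-free on every nontrivial subgroup of $Q$. What the paper's version buys: it treats all $2$-subgroups uniformly, and it ties the example to the fact (\Cref{lem:norm_p_rk_1}) that $SD_{16}$ has no faithful nirrors, at the cost of identifying $\psi$ explicitly.
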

    \begin{proof}
        Irreducibility and real type of $\chi$ follow directly from \Cref{lem:ind_lem}: the Frobenius-Schur indicator of $\nu$ is computed as follows: \[\nu(\chi) = \nu_a(\varphi) = \frac{1}{8}\sum_{p \in aQ}\varphi(p^2) = \frac{1}{8}(4\cdot 0 + 4\cdot 2) = 1.\] Therefore $\chi$ corresponds to a real simple $\bbR G$-module of real type and dimension 4.

        We record values of $\chi$: $\chi(g) = 0$ if $g \not\in I$, and for $g_1\in C_7$, $g_2 \in P$, we have  \[\chi(g_1g_2) = (\theta(g_1)+ \theta(g_1\inv))\varphi(g_2).\]
        Since $\sum_{g\neq 1}\theta(g) = -1$, we have \[\dim V^{C_7} = \frac{1}{7}\big(\chi(1) + \sum_{g\neq 1}\chi(g)\big) = \frac{1}{7}(4 + 2(-1) + 2(-1)) = 0.\]
        Next, we consider the decomposition of $\Res^G_{SD_{16}} \chi$ into irreducibles. Recall from \Cref{lem:norm_p_rk_1} that $SD_{16}$ has no faithful irreducibles of real type, but it has a complex irreducible character of degree 2 $\psi$. Explicitly, $\psi = \Ind^{SD_{16}}_{\langle a\rangle}\lambda$, with $\lambda(a) = \zeta_8$.

        A routine Mackey formula argument using the fact that $\Res^I_Q(\theta \boxtimes \varphi) = \varphi$ gives that \[\Res^G_{SD_{16}}\chi = \Ind^{SD_{16}}_Q \varphi.\] Explicit computation shows that on $Q$, $\psi$ and $\varphi$ attain the same values. The same holds for the complex conjugate $\overline{\psi}$ and $\varphi$. Frobenius reciprocity then implies \[\langle \Ind^{SD_{16}}_Q \varphi, \psi\rangle = \langle\varphi,\varphi\rangle = 1\] and similarly for $\overline{\psi}$. Thus \[\Res^G_{SD_{16}}\chi = \psi + \overline{\psi}.\] Alternatively, one could compute this directly via character table.

        We are now ready to show that all fixed-point dimensions of $V$ are even. Let $H \leq G$. If $H$ contains $C_7$, then $V^H = V^{C_7} = 0$. Otherwise, $H$ is up to conjugacy a subgroup of $SD_{16}$. Then, \[\dim V^H = \langle \Res^{SD_{16}}_H(\psi + \overline{\psi}), 1)\rangle_\bbR = 2\langle \Res^{SD_{16}}_H\psi, 1\rangle_\bbC = 2 \dim \psi^H.\] Finally, $\dim V = 4$, and we are done. That $V$ is orientable now follows from \Cref{prop:equiv_conditions}.

    \end{proof}

    \begin{remark}
        In fact, this group is not a minimal example: a GAP computation later verified that the groups of smallest order that are not nirrorno are \texttt{SmallGroup(48,15), SmallGroup(48,17),} and \texttt{SmallGroup(48,41)}, which are all 2-nilpotent. This can be verified using the GAP code provided in the appendix; the characters in question arise in a similar manner. We also verified that of the 88 finite simple groups with data in \texttt{TomLib}, the only that are non-nirrorno are $J_2, J_3$, and $\on{PSp}_4(\bbF_5)$. A possibly interesting question would be understanding these novel faithful orientable nirrors, as their existence cannot be justified in the same manner as the example above.
    \end{remark}

    \appendix

    \section{Checking linear independence of parity functions}\label{sec:parity_fns}

    The following GAP code checks whether the parity functions of all irreducible real representations are linearly independent for all groups up to a fixed order, thus providing a proof of \Cref{thm:str_nir_not_nir}. This was written with generative AI assistance. We omit groups of odd order and 2-groups, since these cases are already known to be strong nirrorno.

    \begin{verbatim}
        ##################################################################
        ##
        ##  orientation_parity.g
        ##
        ##  DimensionFunctionsIndependent(G[, discardZero])
        ##  computes the mod 2 dimension functions of all irreducible
        ##  real representations of G, removes duplicates, and returns
        ##  true iff the remaining functions are linearly independent
        ##  in CF(G, F_2), false otherwise.
        ##

        ##################################################################
        ##
        #F  RealIrreducibleCharacters( G )
        ##
        ##  Returns the list of characters of the irreducible real
        ##  representations of G.
        ##
        RealIrreducibleCharacters := function(G)
        local tbl, irr, ind, seen, result, i, cc;

        tbl := CharacterTable(G);
        irr := Irr(tbl);
        ind := Indicator(tbl, 2);       # Frobenius-Schur indicators
        seen := [];
        result := [];

        for i in [1 .. Length(irr)] do
        if not i in seen then
        AddSet(seen, i);
        if ind[i] = 1 then                    # real
        Add(result, irr[i]);
        elif ind[i] = -1 then                 # quaternionic
        Add(result, 2 * irr[i]);
        else                                  # complex
        cc := ComplexConjugate(irr[i]);
        AddSet(seen, Position(irr, cc));  # skip
        Add(result, irr[i] + cc);
        fi;
        fi;
        od;

        return result;
        end;

        ##################################################################
        ##
        #F  DimensionOfFixedSubspace( psi, H )
        ##
        ##  The real dimension of V^H for the real
        ##  representation V with character psi
        ##  and a subgroup H of the underlying group.
        ##
        DimensionOfFixedSubspace := function(psi, H)
        return ScalarProduct(RestrictedClassFunction(psi, H),
        TrivialCharacter(H));
        end;

        ##################################################################
        ##
        #F  Mod2DimensionFunction( psi, subgroupReps )
        ##
        ##  The mod 2 dimension function of the real representation with
        ##  character psi, as a GF(2) vector indexed by the given subgroup
        ##  class representatives.
        ##
        Mod2DimensionFunction := function(psi, subgroupReps)
        return List(subgroupReps,
        H -> DimensionOfFixedSubspace(psi, H) mod 2) * Z(2)^0;
        end;

        ##################################################################
        ##
        #F  DimensionFunctionsIndependent( G[, discardZero] )
        ##
        ##  Computes the mod 2 dimension functions of all
        ##  irreducible real representations of G as vectors
        ##  in CF(G, F_2), removes duplicates (and, if
        ##  discardZero = true, also the zero function), and returns
        ##  true iff the remaining vectors are linearly independent.
        ##
        DimensionFunctionsIndependent := function(arg)
        local G, discardZero, subgroupReps, vectors;

        G := arg[1];
        discardZero := Length(arg) > 1 and arg[2] = true;

        subgroupReps := List(ConjugacyClassesSubgroups(G), Representative);

        # Set(list, func) applies func and removes duplicates
        vectors := Set(RealIrreducibleCharacters(G),
        psi -> Mod2DimensionFunction(psi, subgroupReps));

        if discardZero then
        vectors := Filtered(vectors, v -> not IsZero(v));
        fi;

        if IsEmpty(vectors) then
        return true;                 # the empty set is independent
        fi;

        return RankMat(List(vectors, ShallowCopy)) = Length(vectors);
        end;

        ###################################################################
        ##
        #F  IsSkippedOrderForIndependence( order )
        ##
        IsSkippedOrderForIndependence := function(order)
        return order mod 2 = 1
        or (order > 1 and Set(FactorsInt(order)) = [2]);
        end;

        ###################################################################
        ##
        #F  TestDimensionFunctionsIndependent( n[, discardZero] )
        ##
        ##  Runs DimensionFunctionsIndependent over every group in the
        ##  SmallGroups library of order at most n, skipping groups of odd
        ##  order and 2-groups.  Returns true if all tested groups have
        ##  linearly independent (deduplicated) mod 2 dimension functions,
        ##  false otherwise; failing groups are printed and collected.
        ##
        TestDimensionFunctionsIndependent := function(arg)
        local n, discardZero, failures, order, nr, i, G;

        n := arg[1];
        discardZero := Length(arg) > 1 and arg[2] = true;

        failures := [];
        for order in [1 .. n] do
        if IsSkippedOrderForIndependence(order) then
        Print("Skipped order ", order, " (odd order or 2-group)\n");
        else
        nr := NrSmallGroups(order);
        for i in [1 .. nr] do
        G := SmallGroup(order, i);
        if not DimensionFunctionsIndependent(G, discardZero) then
        Add(failures, [order, i]);
        Print("DEPENDENT for SmallGroup(", order, ", ", i,
        ")\n");
        fi;
        od;
        Print("Checked all ", nr, " group(s) of order ", order, "\n");
        fi;
        od;

        if IsEmpty(failures) then
        Print("\nIndependence holds for all tested groups of order <= ",
        n, "\n");
        return true;
        fi;
        Print("\nFailures at (order, id): ", failures, "\n");
        return false;
        end;
    \end{verbatim}

    The output first finds an example for groups of order 144. The following commands are used to examine \texttt{SmallGroup(144,125)}.

    \begin{verbatim}

        SetUserPreference("AtlasRep", "AtlasRepAccessRemoteFiles", false);
        Read("dimension_independence.g");
        G := SmallGroup(144, 125);;
        Print("group: ", StructureDescription(G), "\n");
        reps := List(ConjugacyClassesSubgroups(G), Representative);;
        Print("subgroup classes: ", Length(reps), "\n");
        vecs := [];;
        for psi in RealIrreducibleCharacters(G) do
        v := Mod2DimensionFunction(psi, reps);;
        Print("deg ", ValuesOfClassFunction(psi)[1], "  FS-type ",
        " vec ", List(v, x -> IntFFE(x)), "\n");
        Add(vecs, v);
        od;
        vecs := Filtered(Set(vecs), v -> not IsZero(v));;
        Print("distinct nonzero: ", Length(vecs), ", rank: ",
        RankMat(List(vecs, ShallowCopy)), "\n");
    \end{verbatim}

    The output is as follows:

    \begin{verbatim}
        group: C3 : GL(2,3)
        subgroup classes: 46
        deg 1  FS-type  vec [ 1, 1, 1, 1, 1, 1, 1, 1, 1, 1, 1, 1, 1, 1, 1, 1, 1, 1,
        1, 1, 1, 1, 1, 1, 1, 1, 1, 1, 1, 1, 1, 1, 1, 1, 1, 1, 1, 1, 1, 1, 1, 1, 1,
        1, 1, 1 ]
        deg 1  FS-type  vec [ 1, 1, 0, 1, 1, 1, 1, 1, 0, 0, 0, 1, 0, 0, 1, 0, 1, 0,
        1, 0, 1, 0, 0, 1, 0, 0, 1, 0, 0, 0, 0, 0, 1, 1, 1, 0, 0, 1, 1, 0, 0, 0, 0,
        0, 1, 0 ]
        deg 2  FS-type  vec [ 0, 0, 1, 0, 0, 0, 0, 0, 1, 0, 0, 0, 0, 1, 0, 1, 0, 0,
        0, 0, 0, 1, 1, 0, 0, 0, 0, 1, 0, 1, 0, 0, 0, 0, 0, 0, 0, 0, 0, 0, 0, 1, 0,
        0, 0, 0 ]
        deg 2  FS-type  vec [ 0, 0, 1, 0, 0, 0, 0, 0, 1, 0, 0, 0, 0, 0, 0, 0, 0, 1,
        0, 0, 0, 1, 1, 0, 0, 1, 0, 0, 0, 1, 0, 0, 0, 0, 0, 1, 1, 0, 0, 0, 0, 0, 1,
        0, 0, 0 ]
        deg 2  FS-type  vec [ 0, 0, 1, 0, 0, 0, 0, 0, 1, 1, 0, 0, 0, 0, 0, 0, 0, 0,
        0, 1, 0, 1, 1, 0, 1, 0, 0, 0, 0, 1, 0, 0, 0, 0, 0, 0, 0, 0, 0, 0, 0, 0, 0,
        1, 0, 0 ]
        deg 2  FS-type  vec [ 0, 0, 1, 0, 0, 0, 0, 0, 1, 0, 1, 0, 1, 0, 0, 0, 0, 0,
        0, 0, 0, 1, 1, 0, 0, 0, 0, 0, 1, 1, 0, 0, 0, 0, 0, 0, 0, 0, 0, 0, 1, 0, 0,
        0, 0, 0 ]
        deg 4  FS-type  vec [ 0, 0, 0, 0, 0, 0, 0, 0, 0, 0, 0, 0, 0, 0, 0, 0, 0, 0,
        0, 0, 0, 0, 0, 0, 0, 0, 0, 0, 0, 0, 0, 0, 0, 0, 0, 0, 0, 0, 0, 0, 0, 0, 0,
        0, 0, 0 ]
        deg 3  FS-type  vec [ 1, 1, 1, 1, 1, 1, 1, 1, 1, 0, 0, 1, 0, 0, 1, 0, 1, 1,
        1, 0, 0, 0, 1, 1, 0, 1, 1, 0, 0, 0, 0, 0, 1, 0, 0, 0, 1, 0, 0, 0, 0, 0, 0,
        0, 0, 0 ]
        deg 3  FS-type  vec [ 1, 1, 0, 1, 1, 1, 1, 1, 0, 1, 1, 1, 1, 1, 1, 1, 1, 0,
        1, 1, 0, 1, 0, 1, 1, 0, 1, 1, 1, 0, 1, 1, 1, 0, 0, 1, 0, 0, 0, 1, 0, 0, 0,
        0, 0, 0 ]
        deg 4  FS-type  vec [ 0, 0, 0, 0, 0, 0, 0, 0, 0, 1, 1, 0, 1, 0, 0, 0, 0, 0,
        0, 1, 0, 0, 0, 0, 0, 0, 0, 0, 0, 0, 0, 0, 0, 0, 0, 0, 0, 0, 0, 0, 0, 0, 0,
        0, 0, 0 ]
        deg 4  FS-type  vec [ 0, 0, 0, 0, 0, 0, 0, 0, 0, 1, 1, 0, 1, 1, 0, 1, 0, 0,
        0, 1, 0, 0, 0, 0, 0, 0, 0, 0, 0, 0, 1, 1, 0, 0, 0, 0, 0, 0, 0, 0, 0, 0, 0,
        0, 0, 0 ]
        deg 4  FS-type  vec [ 0, 0, 0, 0, 0, 0, 0, 0, 0, 1, 0, 0, 0, 1, 0, 1, 0, 0,
        0, 1, 0, 0, 0, 0, 0, 0, 0, 0, 0, 0, 0, 0, 0, 0, 0, 0, 0, 0, 0, 0, 0, 0, 0,
        0, 0, 0 ]
        deg 4  FS-type  vec [ 0, 0, 0, 0, 0, 0, 0, 0, 0, 0, 1, 0, 1, 1, 0, 1, 0, 0,
        0, 0, 0, 0, 0, 0, 0, 0, 0, 0, 0, 0, 0, 0, 0, 0, 0, 0, 0, 0, 0, 0, 0, 0, 0,
        0, 0, 0 ]
        deg 6  FS-type  vec [ 0, 0, 1, 0, 0, 0, 0, 0, 1, 1, 1, 0, 1, 1, 0, 1, 0, 0,
        0, 1, 0, 1, 1, 0, 1, 0, 0, 1, 1, 0, 0, 0, 0, 0, 0, 0, 0, 0, 0, 0, 0, 0, 0,
        0, 0, 0 ]
        distinct nonzero: 13, rank: 12
    \end{verbatim}

    Therefore, $G$ is not strongly nirrorno. Since $G$ is solvable, we deduce from \Cref{prop:solv_cond} that $G$ is nirrorno (noting that $G$ has 13 real irreducible representations, see e.g. \cite{GN}).

    \section{Checking Question 2}

    The following GAP code verifies that Question 2 holds for any finite simple groups which has data in the GAP library \texttt{TomLib}. There are 88 final simple groups total, including 12 sporadic groups.
    The code also checks for orientable nontrivial irreducible real representations and outputs any examples.

    \begin{verbatim}
        #############################################################################
        ##
        ##  sporadic_orientability.g
        ##
        ##  Tests the equivalence "non-orientable <=> parity function
        ##  non-constant on solvable subgroups" for groups with a library table
        ##  of marks (in particular the simple groups covered by
        ##  TomLib).
        ##
        ##  CheckTomGroup(name) returns true if the equivalence holds for the
        ##  group, false with a printed counterexample otherwise.
        ##

        Read("orientability_parity.g");

        CheckTomGroup := function(name)
        local tom, G, tbl, pc, irr, ind, n, orders, t, solv, j, o, rep,
        realpos, trivpos, constantOnes, i, psi, dims, par0, constant,
        perfdata, P, N, hom, Q, entry, Kbar, Hbar, result;

        t := Runtime();
        tom := TableOfMarks(name);
        if tom = fail then
        Print(name, ": no table of marks available\n");
        return fail;
        fi;
        G := UnderlyingGroup(tom);
        tbl := CharacterTable(name);
        if tbl = fail or FusionCharTableTom(tbl, tom) = fail then
        Print("  (no usable library table fusion; computing from group)\n");
        tbl := CharacterTable(G);
        fi;
        pc := PermCharsTom(tbl, tom);      # 1_H^G for every subgroup class
        irr := Irr(tbl);
        ind := Indicator(tbl, 2);
        n := Length(pc);
        orders := OrdersTom(tom);
        Print(name, ": |G| = ", Size(G), ", ", n, " subgroup classes, ",
        Length(irr), " irreducibles (setup ", Runtime() - t, " ms)\n");

        # solvability of the subgroup classes
        t := Runtime();
        solv := [];
        for j in [1 .. n] do
        o := orders[j];
        if o mod 2 = 1 or Length(Set(FactorsInt(o))) <= 2 then
        solv[j] := true;           # Feit-Thompson / Burnside
        else
        solv[j] := IsSolvableGroup(RepresentativeTom(tom, j));
        fi;
        od;
        Print("  solvable classes: ", Number(solv, x -> x), " of ", n,
        " (", Runtime() - t, " ms)\n");

        trivpos := Position(irr, TrivialCharacter(tbl));
        realpos := Filtered([1 .. Length(irr)],
        i -> ind[i] = 1 and i <> trivpos);
        Print("  nontrivial real-type characters: ", Length(realpos), "\n");

        # parity functions: dims via Frobenius reciprocity with 1_H^G
        constantOnes := [];
        for i in realpos do
        psi := irr[i];
        par0 := psi[1] mod 2;          # value at the trivial subgroup
        constant := true;
        for j in [1 .. n] do
        if solv[j] and ScalarProduct(psi, pc[j]) mod 2 <> par0 then
        constant := false;
        break;
        fi;
        od;
        if constant then
        Add(constantOnes, i);
        fi;
        od;
        Print("  parity-constant nontrivial real-type characters: ",
        constantOnes, "\n");

        if IsEmpty(constantOnes) then
        Print("  => equivalence holds (all nontrivial real-type parity ",
        "functions non-constant)\n");
        return true;
        fi;

        # witness scan for the parity-constant characters: pairs with
        # non-solvable H, via perfect subgroup classes from the tom.
        # this branch needs group-side class functions, so connect the
        # library table to the group now (with a fallback to computing the
        # table from the group if the class matching is ambiguous)
        if not HasUnderlyingGroup(tbl) then
        tbl := CharacterTableWithStoredGroup(G, tbl);
        fi;
        if tbl = fail then
        Print("  (class matching ambiguous; computing table from group)\n");
        tbl := CharacterTable(G);
        pc := PermCharsTom(tbl, tom);
        irr := Irr(tbl);
        ind := Indicator(tbl, 2);
        trivpos := Position(irr, TrivialCharacter(tbl));
        constantOnes := Filtered([1 .. Length(irr)], i ->
        ind[i] = 1 and i <> trivpos and
        ForAll([1 .. n], j -> not solv[j] or
        ScalarProduct(irr[i], pc[j]) mod 2 = irr[i][1] mod 2));
        Print("  parity-constant (group side): ", constantOnes, "\n");
        if IsEmpty(constantOnes) then
        return true;
        fi;
        else
        irr := Irr(tbl);
        fi;
        t := Runtime();
        perfdata := [];
        for j in [2 .. n] do
        if not solv[j] then
        P := RepresentativeTom(tom, j);
        if IsPerfectGroup(P) and Size(P) < Size(G) then
        N := Normalizer(G, P);
        hom := NaturalHomomorphismByNormalSubgroup(N, P);
        Q := Image(hom);
        entry := rec(N := N, hom := hom, Q := Q, pairs := []);
        for Kbar in List(ConjugacyClassesSubgroups(Q),
        Representative) do
        for Hbar in NormalSubgroups(Kbar) do
        if Hbar <> Kbar
        and IsCyclic(FactorGroupNC(Kbar, Hbar)) then
        Add(entry.pairs, [Kbar, Hbar]);
        fi;
        od;
        od;
        if not IsEmpty(entry.pairs) then
        Add(perfdata, entry);
        fi;
        fi;
        fi;
        od;
        Print("  perfect-class witness data: ", Length(perfdata),
        " normalizer quotients (", Runtime() - t, " ms)\n");

        result := true;
        for i in constantOnes do
        if HasNonSolvableWitness(irr[i], perfdata) then
        Print("  COUNTEREXAMPLE: Irr[", i, "] of degree ", irr[i][1],
        " is non-orientable with constant parity function!\n");
        result := false;
        fi;
        od;
        if result then
        Print("  => equivalence holds\n");
        fi;
        return result;
        end;

    \end{verbatim}

    \bibliography{bib}
    \bibliographystyle{alpha}

\end{document}